\newcommand{\defi}[1]{{\upshape\sffamily #1}}
\DeclareMathOperator{\ShHom}{\mathscr{H}\text{\kern -3pt {\calligra\large om}}\,}
\renewcommand{\a}{\alpha}
\newcommand{\D}{\mathcal{D}}
\newcommand{\bw}{\bigwedge}
\renewcommand{\ll}{\lambda}
\newcommand{\oo}{\otimes}
\renewcommand{\SS}{\mathbb{S}}
\let\uuu\u 
\renewcommand{\u}{\underline{u}}
\newcommand{\C}{\mathbb{C}}
\newcommand{\GL}{\operatorname{GL}}
\newcommand{\Hom}{\operatorname{Hom}}
\newcommand{\lie}{\mathfrak{g}}
\newcommand{\Spec}{\operatorname{Spec}}
\newcommand{\Sym}{\operatorname{Sym}}
\newcommand{\CC}{\operatorname{CC}}
\newcommand{\dom}{\operatorname{dom}}
\newcommand{\DR}{\operatorname{DR}}
\newcommand{\Eu}{\operatorname{Eu}}
\newcommand{\opmod}{\operatorname{mod}}
\newcommand{\op}{\operatorname}
\newcommand{\reg}{\operatorname{reg}}
\renewcommand{\skew}{\operatorname{skew}}
\newcommand{\symm}{\operatorname{symm}}
\newcommand{\opSS}{\operatorname{SS}}
\newcommand{\opS}{\operatorname{S}}
\newcommand{\bb}[1]{\mathbb{#1}}
\renewcommand{\rm}[1]{\textrm{#1}}
\newcommand{\mc}[1]{\mathcal{#1}}
\newcommand{\ol}[1]{\overline{#1}}
\def\lra{\longrightarrow}
\newtheorem{theorem}{Theorem}[section]
\newtheorem*{theorem*}{Theorem}
\newtheorem*{problem*}{Problem}
\newtheorem{lemma}[theorem]{Lemma}
\newtheorem{proposition}[theorem]{Proposition}
\newtheorem{corollary}[theorem]{Corollary}
\newtheorem*{corollary*}{Corollary}
\newtheorem*{main-thm*}{Main Theorem}
\newtheorem*{Euler-obstructions*}{Theorem on local Euler obstructions}
\newtheorem*{char-cycles*}{Theorem on characteristic cycles of simple $\D_X$-modules}
\newtheorem*{local-Euler*}{Theorem on intersection cohomology local Euler characteristics}
\newtheorem*{deRham*}{Theorem on de Rham cohomology of simple $\mc{D}_X$-modules}
\theoremstyle{definition}
\newtheorem*{definition*}{Definition}
\newtheorem{example}[theorem]{Example}
\theoremstyle{remark}
\newtheorem{remark}[theorem]{Remark}
\newtheorem*{remark*}{Remark}
\numberwithin{equation}{section}
\begin{document}

\title{Local Euler obstructions for determinantal varieties}

\author{Andr\'as C. L\H{o}rincz}
\address{Institut f\"ur Mathematik, Humboldt--Universit\"at zu Berlin, Berlin, Germany 12489}
\email{lorincza@hu-berlin.de}

\author{Claudiu Raicu}
\address{Department of Mathematics, University of Notre Dame, 255 Hurley, Notre Dame, IN 46556\newline
\indent Institute of Mathematics ``Simion Stoilow'' of the Romanian Academy}
\email{craicu@nd.edu}

\subjclass[2020]{Primary 14F10, 14F40, 14M12, 55N33; Secondary 13A50, 32S60}

\date{\today}

\keywords{Determinantal varieties, local Euler obstruction, $\mc{D}$-modules, de Rham cohomology, intersection cohomology}

\begin{abstract} 
The goal of this note is to explain a derivation of the formulas for the local Euler obstructions of determinantal varieties of general, symmetric and skew-symmetric matrices, by studying the invariant de Rham complex and using character formulas for simple equivariant $\mc{D}$-modules. These calculations are then combined with standard arguments involving Kashiwara's local index formula and the description of characteristic cycles of simple equivariant $\mc{D}$-modules. The formulas are implicit in the work of Boe and Fu, and in the case of general matrices they have also been obtained recently by Gaffney--Grulha--Ruas, for skew-symmetric matrices by Promtapan and Rim\'anyi, and for all cases by Zhang.
\end{abstract}

\maketitle

\section{Introduction}\label{sec:intro}

Let $X$ be a smooth complex algebraic variety. To any closed subvariety $V\subseteq X$ one can associate a constructible function $\Eu_V$, whose value $\Eu_V(p)$ at a point $p$ is called the \defi{local Euler obstruction} of $V$ at $p$, and represents a measure of the singularity of $V$ at $p$. Local Euler obstructions were introduced by MacPherson using transcendental methods in \cite{macpherson}*{Section~3}, and were later given a purely algebraic description (\cite{gon-spr}*{Section~4.3}, \cites{sabbah,kennedy}). After giving a quick review of the theory of local Euler obstructions, our goal is to illustrate the theory by computing the local Euler obstructions for the determinantal varieties of general, symmetric and skew-symmetric matrices. The results for general matrices have appeared in \cite{gaffney-et-al}*{Theorem~2.17}, \cite{zhang}*{Theorem~3}, for skew-symmetric matrices in \cite{promtapan}*{Theorem~9.11} and \cite{promrim}*{Theorem~8.1}, and for all three cases in the recent work of Zhang \cite{zhang2}*{Section~6}. Here we follow a different approach using $\D$-module and representation-theoretic methods based on the invariant de Rham complex, thus providing a recipe for dealing with other representations with finitely many orbits as well (see Corollary \ref{cor:repfin}). The symmetric case is especially interesting because simple equivariant $\D$-modules supported on symmetric determinantal varieties can have reducible characteristic varieties, which is the main obstacle for computing the local Euler obstructions directly from Kazhdan--Lusztig theory (see for instance the proof of \cite{mihalcea-singh}*{Theorem~10.4}). Further, this case provides supporting evidence for the recent positivity conjecture of Mihalcea--Singh concerning the local Euler obstructions associated to the Schubert stratification of the Lagrangian Grassmannian \cite{mihalcea-singh}*{Conjecture~10.2}, which was verified in \cite{LeVan-Rai}.

Throughout this work, $X$ will denote one of the following affine spaces:
\begin{enumerate}
    \item $X^{m,n}\simeq\bb{C}^{m}\oo\bb{C}^n$ -- the space of $m\times n$ matrices, where $m\geq n$.
    \item $X^{n,\symm}\simeq\Sym^2(\bb{C}^n)$ -- the space of $n\times n$ symmetric matrices.
    \item $X^{n,\skew}\simeq\bigwedge^2(\bb{C}^n)$ -- the space of $n\times n$ skew-symmetric matrices.
\end{enumerate}

In each of the cases above, we will consider the rank stratification on $X$, denote the strata by $X_i$ and their closures by $V_i = \ol{X_i}$. More precisely:
\begin{enumerate}
    \item If $X=X^{m,n}$ then $X_i$ denotes the stratum of rank $i$ matrices, for $0\leq i\leq n$.
    \item If $X=X^{n,\symm}$ then $X_i$ denotes the stratum of rank $i$ symmetric matrices, for $0\leq i\leq n$.
    \item If $X=X^{n,\skew}$ then $X_i$ denotes the stratum of rank $2i$ skew-symmetric matrices, for $0\leq i\leq \lfloor n/2\rfloor$.
\end{enumerate}
The local Euler obstruction functions $\Eu_{V_i}$ are constant along each stratum (see for instance \cite{bra-sch}*{Corollaire~10.2}), so we can define
\begin{equation}\label{eq:def-eij}
e_{i,j} = \Eu_{V_j}(x_i)\text{ for any }x_i\in X_i.
\end{equation}
Since $X_i\subseteq V_j$ if and only if $i\leq j$, and $X_i$ is the non-singular locus of $V_i$, it follows from \cite{macpherson}*{Section~3} that
\[ e_{i,i} = 1\text{ and }e_{i,j} = 0\text{ for }i>j.\]
If we write $\mathds{1}_{X_i}$ for the indicator function of the stratum $X_i$, then \eqref{eq:def-eij} is equivalent to the expression
\[ \Eu_{V_j} = \sum_{i=0}^j e_{i,j} \cdot \mathds{1}_{X_i}.\]
The following theorem records the values of the local Euler obstructions for determinantal varieties (see also \cites{zhang, gaffney-et-al,promtapan,promrim,zhang2}) -- note that $e_{i,j}\geq 0$ in all cases, and see \cite{mihalcea-singh}*{Conjecture~10.2}.

\begin{Euler-obstructions*}
 The local Euler obstructions for determinantal varieties are given as follows.
 \begin{enumerate}
     \item If $X=X^{m\times n}$, $m\geq n$, then for all $0\leq i\leq j\leq n$ we have
     \[ e_{i,j} = {n-i\choose j-i}.\]
     \item If $X=X^{n,\symm}$ then
     \[ e_{i,j} = \begin{cases}
 0 & \text{ if }n-i\text{ is even and }n-j\text{ is odd}; \\
 \displaystyle{\lfloor\frac{n-i}{2}\rfloor \choose \lfloor\frac{j-i}{2}\rfloor} & \text{otherwise}.
 \end{cases}\]
     \item If $X=X^{n,\skew}$ and $m=\lfloor n/2\rfloor$ then for all $0\leq i\leq j\leq m$ we have
     \[ e_{i,j} = {m-i\choose j-i}.\]
 \end{enumerate}
\end{Euler-obstructions*}
Due to the fact that our matrix spaces are naturally identified with open subsets of classical Grassmannians (see the discussion below), the formulas in the theorem above are implicit in \cite{boe-fu}*{Section~6}. Similarly to \cite{boe-fu}, our strategy is to exploit the connection between $e_{i,j}$ and two closely related invariants of stratifications: characteristic cycles and intersection cohomology. Following \cite{ginsburg}*{p.~331}, we consider the following commutative diagram associated to $X$:

\[
\xymatrix@R+5pc{
{\boxed{\begin{array}{c}\text{perverse sheaves} \\ \text{on $X$} \end{array}}} \ar[d]_{\chi} \ar[drrr]^{\opSS} & & & {\boxed{\begin{array}{c}\text{regular holonomic} \\ \mc{D}_X\text{--modules} \end{array}}} \ar[lll]_{\DR}^{\sim} \ar[d]^{\CC} \\
{\boxed{\begin{array}{c}\text{constructible functions} \\ \text{on $X$} \end{array}}} \ar[rrr]^{\opS}_{\sim} & & & {\boxed{\begin{array}{c}\text{Lagrangian cycles} \\ \text{in }T^*X \end{array}}}  \\
}
\]

Here $\DR$ denotes the de Rham functor, $\CC(M)$ is the characteristic cycle of a $\D_X$-module $M$, or equivalently, the singular support ($\opSS$) of the associated perverse sheaf $\DR(M)$ (see \cite{htt}*{Section~2.2,~Chapter~7}, \cite{kas-sch}*{Chapters~V and IX}). The map $\chi$ associates to a perverse sheaf, or more generally to a constructible complex $\mc{F}^{\bullet}$, the local Euler characteristic function
\[ x \mapsto \chi_x(\mc{F}^{\bullet}):= \sum_i (-1)^i\cdot \dim H^i\left(\mc{F}^{\bullet}_x\right),\]
where $\mc{F}^{\bullet}_x$ denotes the stalk at the point $x$. A key fact in the diagram above is that $\opSS$ factors through $\chi$, and the local Euler obstruction function $\Eu_V$ is the unique constructible function (the reader can take this as the definition of $\Eu_V$) with the property that
\begin{equation}
 \opS(\Eu_V) = (-1)^{\dim V}\cdot T^*_V X,    
\end{equation}
where $T^*_V X$ denotes the closure of the conormal bundle to the regular locus $V^{\reg}\subseteq V$ (see \cite{sabbah}, \cite{kennedy}*{Lemma~4 and Section~4} and \cite{kas-sch}*{Section~9.7}).

Returning to our matrix spaces $X$ with their natural rank stratification, we let $IC^{\bullet}_{V_i}$ denote the intersection cohomology complex associated to $V_i$. Let $D_i$ be the simple, regular holonomic $\mc{D}_X$-module associated to $IC^{\bullet}_{V_i}$ through the Riemann--Hilbert correspondence \cite{htt}*{Theorem 7.2.5} (called the Brylinski--Kashiwara module of $V_i \subset X$, see \cite{bry-kash}*{Section 8}). We define the \defi{microlocal indices} $m_{i,j}$ via
\begin{equation}\label{eq:def-mij}
   \opSS(IC^{\bullet}_{V_j}) = \CC(D_j) = \sum_i m_{i,j} \cdot T^*_{V_i}X,
\end{equation}
We have $m_{i,j}=0$ for $i>j$, as well as the following.

\begin{char-cycles*}
The microlocal indices $m_{i,j}$ for determinantal varieties are as follows.
 \begin{enumerate}
     \item If $X=X^{m\times n}$ then $m_{i,i}=1$ and $m_{i,j}=0$ for $i\neq j$.
     \item If $X=X^{n,\symm}$ then $m_{i,i}=1$ for all $i$,
    \[m_{i-1,i} = 1\text{ if }n-i\text{ is odd},\]
     and $m_{i,j}=0$ otherwise.
     \item If $X=X^{n,\skew}$ then $m_{i,i}=1$ and $m_{i,j}=0$ for $i\neq j$.
 \end{enumerate}
\end{char-cycles*}

The description of characteristic cycles in the theorem above is noted in \cite{raicu-dmods}*{Remark~1.5} (see also \cite{lor-wal}*{Corollary~3.19 and Section~5} and \cite{braden-thesis}*{Section 3.4}). In general, establishing the irreducibility of characteristic cycles, or identifying the non-trivial components when they exist, is quite a difficult task! We have the following:
\begin{enumerate}
    \item The main result of \cite{BFL} gives the irreducibility of the characteristic cycles for the Schubert stratification of the (type A) Grassmannian. Since $X^{m,n}$ can be identified with the opposite big cell in the Grassmannian $\bb{G}(n,m+n)$ in such a way that the Schubert stratification refines the rank stratification (see \cite{lak-rag}*{Section~5.2.2}), part (1) of the theorem above is a special case of \cite{BFL}*{Theorem~0.1}. 
    \item The characteristic cycles for the (type C) Lagrangian Grassmannian $\bb{LG}(n,2n)$ are described combinatorially in \cite{boe-fu}*{Theorem~7.1D}, and they may be reducible. Since $X^{n,\symm}$ is the opposite big cell of $\bb{LG}(n,2n)$ (see \cite{lak-rag}*{Section~6.2.5}), one can then derive (with a little work) conclusion (2) of the theorem above.
    \item Finally, it follows from \cite{boe-fu}*{Theorem~7.1A} that the characteristic cycles for the (type D) orthogonal Grassmannian $\bb{OG}(n,2n)$ are irreducible. Since $X^{n,\skew}$ is the opposite big cell in $\bb{OG}(n,2n)$ (see \cite{lak-rag}*{Section~7.2.5}), conclusion (3) of the theorem above follows.
\end{enumerate}
For more non-trivial calculations of characteristic cycles, see \cite{kas-sai}*{Section~7}, \cites{braden-thesis,evens-mirkovic,timchenko}.

The final ingredient in our discussion is given by the intersection cohomology local Euler characteristics
\begin{equation}\label{eq:def-chi-ij}
    \chi_{i,j} = \chi_{x_i}(IC^{\bullet}_{V_j}) \text{ for any }x_i\in X_i.
\end{equation}
If we write $d_i = \dim(V_i)$ then we have
\[ \chi_{i,i} = (-1)^{d_i}\text{ and }\chi_{i,j}=0\text{ for }i>j.\]

\begin{local-Euler*}
 The intersection cohomology local Euler characteristics for determinantal varieties are computed as follows.
 \begin{enumerate}
     \item If $X=X^{m\times n}$, $m\geq n$, then for all $0\leq i\leq j\leq n$ we have
     \[ \chi_{i,j} = (-1)^{d_j}\cdot{n-i\choose j-i}.\]
     \item If $X=X^{n,\symm}$ then
        \[ \chi_{i,j} = (-1)^{d_j}\cdot\displaystyle{\lfloor \frac{n-i}{2}\rfloor + \epsilon \choose \lfloor\frac{j-i}{2}\rfloor},\text{ where }
     \epsilon = \begin{cases}
    1 & \text{if }(j-i)\text{ is even and }(n-i)\text{ is odd}; \\
    0 & \text{otherwise}.
    \end{cases}
    \]
     \item If $X=X^{n,\skew}$ and $m=\lfloor n/2\rfloor$ then for all $0\leq i\leq j\leq m$ we have
     \[ \chi_{i,j} = (-1)^{d_j}\cdot{m-i\choose j-i}.\]
 \end{enumerate}
 \end{local-Euler*}

For the Schubert stratification, the local intersection cohomology groups are computed as coefficients of Kazhdan--Lusztig polynomials \cite{KL-poincare}*{Theorem~4.3}, \cite{htt}*{Theorem~12.2.5}. Using the identification of our matrix spaces with opposite cells in an appropriate Grassmannian, it follows that $\chi_{i,j}$ is equal up to a sign with the value at $1$ of a corresponding Kazhdan--Lusztig polynomial. Explicit descriptions of these polynomials are given for $\bb{G}(n,m+n)$ in \cite{las-sch}*{Th\'eor\`eme~7.8}, for $\bb{LG}(n,2n)$ in \cite{boe}*{Theorem~3.13}, and for $\bb{OG}(n,2n)$ in \cite{boe}*{Theorem~4.1}. In the case of matrix spaces we will give an alternative derivation of the Kazhdan--Lusztig polynomials and the invariants $\chi_{i,j}$ based on a study of the invariant de Rham complex.

The connection between the three theorems listed above comes from the Kashiwara microlocal index formula \cite{kas-loc-ind}*{Section~2}, \cite{BDK}*{Th\'eor\`emes~1,~2}, \cite{ginsburg}*{Theorem~8.2}, which in our case can be phrased as the following identity:
\begin{equation}\label{eq:local-index} \chi_{i,j} = \sum_{k=i}^j (-1)^{d_k} \cdot e_{i,k} \cdot m_{k,j}.
\end{equation}
Equivalently, if we consider the upper-triangular matrices
\[ \mc{X} = (\chi_{i,j}),\quad \mc{E} = (e_{i,j}),\quad \mc{M} = \left((-1)^{d_i} m_{i,j}\right) \]
then we have the identity
\begin{equation}\label{eq:X=EM}
\mc{X} = \mc{E} \cdot \mc{M}.
\end{equation}
\begin{example}\label{ex:2x2-symm}
 Consider the space $X^{2,\symm}$ of $2\times 2$ symmetric matrices. We have $d_0=0$, $d_1=2$, $d_2=3$, and
 \[ \mc{X} = \begin{bmatrix} 1 & 1 & -1 \\ 0 & 1 & -1 \\ 0 & 0 & -1 \end{bmatrix},\quad
  \mc{E} = \begin{bmatrix} 1 & 0 & 1 \\ 0 & 1 & 1 \\ 0 & 0 & 1 \end{bmatrix},\quad
 \mc{M} = \begin{bmatrix} 1 & 1 & 0 \\ 0 & 1 & 0 \\ 0 & 0 & -1 \end{bmatrix}.
 \]
 We single out the local Euler obstruction $e_{0,1} = \Eu_{V_1}(0) = 0$, where $V_1$ is a quadric cone (defined by the vanishing of the determinant of the $2\times 2$ symmetric matrix), and $0$ is the vertex of $V_1$. It was noted in \cite{macpherson}*{Section~3} that if $V$ is the cone with vertex $0$ over a non-singular plane curve of degree $d$, then $\Eu_V(0) = 2d-d^2$, so our example coincides with the special case $d=2$.
\end{example}

Using the inductive structure of determinantal varieties, it is not difficult to reduce the calculation of the invariants $\chi_{i,j}$ to that of $\chi_{0,j-i}$ for a smaller matrix space. For this reason, it is enough to compute the local Euler characteristic of the stalk at $0$ for each $IC_{V_j}^{\bullet}$, which in turn gets identified via the $\bb{C}^*$-action with the global intersection cohomology Euler characteristic of $V_j$. The intersection cohomology of $V_j$ agrees (up to a shift) with the de Rham cohomology of the associated simple $\mc{D}$-module $D_j$, and it is encoded by a Kazhdan--Lusztig polynomial. We show that it can be computed by restricting to the invariant de Rham complex (Corollary~\ref{cor:invderham}), which we determine explicitly and observe that it has no non-zero differentials.

\begin{deRham*} If we write $h^i_{dR}(D_p)$ for the dimension of $H^i_{dR}(D_p)$, and $\Omega^i_X$ for the module of $i$-differentials on $X$, then we have.
 \begin{enumerate}
     \item If $X=X^{m\times n}$, $m\geq n$, and $G=\GL_m\times\GL_n$, then for all $0\leq p\leq n$ we have
     \[\sum_{i\geq 0} \dim(\Omega^i_X\oo D_p)^{G} \cdot q^i = \sum_{i\geq 0} h^i_{dR}(D_p) \cdot q^i = {n \choose p}_{q^2} \cdot q^{(m-p)\cdot(n-p)}.\]
     \item If $X=X^{n,\symm}$ and $G=\GL_n$, then for all $0\leq p\leq n$ we have
     \[\sum_{i\geq 0} \dim(\Omega^i_X\oo D_p)^{G} \cdot q^i = \sum_{i\geq 0} h^i_{dR}(D_p) \cdot q^i = {\lfloor\frac{n}{2}\rfloor+\epsilon \choose \lfloor\frac{p}{2}\rfloor}_{q^4} \cdot q^{{n-p+1\choose 2}},\]
     where $\epsilon=1$ if $p$ is even and $n$ is odd, and $\epsilon=0$ otherwise.
     \item If $X=X^{n,\skew}$, $G=\GL_n$, and $m=\lfloor n/2\rfloor$, then for all $0\leq p\leq m$ we have
     \[\sum_{i\geq 0} \dim(\Omega^i_X\oo D_p)^{G} \cdot q^i = \sum_{i\geq 0} h^i_{dR}(D_p) \cdot q^i = {m \choose p}_{q^4} \cdot q^{{n\choose 2}-p(2n-2p-1)}.\]
 \end{enumerate}
\end{deRham*}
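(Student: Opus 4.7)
The plan is to reduce the computation of $h^i_{dR}(D_p)$ to the $G$-invariant subcomplex of the de Rham complex, compute that subcomplex's graded dimensions by a representation-theoretic calculation, and then verify that its differentials vanish for parity reasons.

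First I would invoke Corollary~\ref{cor:invderham}: since $D_p$ is $G$-equivariant and the $\bb{C}^*$-scaling on $X$ (which sits inside $G$) contracts $X$ to the origin, a standard contracting-homotopy argument using the Euler vector field shows that the inclusion $(\Omega^\bullet_X \oo D_p)^G \hookrightarrow \Omega^\bullet_X \oo D_p$ is a quasi-isomorphism. Consequently $h^i_{dR}(D_p) = \dim H^i\bigl((\Omega^\bullet_X \oo D_p)^G, d\bigr)$, and the problem is transferred to the finite-dimensional invariant complex.

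Second I would determine the invariants $(\Omega^i_X \oo D_p)^G$ explicitly. Writing $V$ for the $G$-representation underlying $X$, one has $\Omega^i_X \cong \bw^i V^* \oo_{\bb{C}} \bb{C}[X]$ as $G \times \bb{C}^*$-modules (the $\bb{C}^*$ being the scaling action, which records the $q$-grading). Cauchy-type identities in each of the three classical settings decompose $\bw^i V^*$ into irreducible $G$-representations indexed by partitions fitting in a prescribed rectangle. The characters of the simple $\mc{D}$-modules $D_p$ as $G$-modules are known from \cite{raicu-dmods} and its analogues: in each case $D_p$ is a direct sum of irreducible $G$-representations running over an explicit family of partitions of prescribed shape. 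Pairing these two decompositions and extracting the multiplicity of the trivial representation reduces $\dim(\Omega^i_X \oo D_p)^G$ to a combinatorial sum over pairs of partitions with size/shape constraints, which a short Gaussian-binomial manipulation collapses into the closed form claimed by the theorem.

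Finally I would establish the middle equality by a parity argument. The right-hand side of each case is supported in degrees lying in a single residue class modulo $2$ in case (1) (the Gaussian binomial is in $q^2$) and modulo $4$ in cases (2) and (3) (the Gaussian binomial is in $q^4$); in particular, only cohomological degrees of a single parity can contribute non-zero invariants. Since $d$ raises the cohomological degree by exactly one, its source or target is zero in every degree, so $d$ vanishes identically on the invariant subcomplex. This forces $h^i_{dR}(D_p) = \dim(\Omega^i_X \oo D_p)^G$, completing both equalities.

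The main obstacle is the second step: assembling the known character of $D_p$ with the exterior-algebra Cauchy decomposition and recognizing the resulting partition sum as the claimed Gaussian binomial. The symmetric case is the most delicate, since the character of $D_p$ depends on the parity of $n-p$ (the same phenomenon responsible for the reducible characteristic cycles appearing in the preceding theorem), and this dependence is what produces the parameter $\epsilon$ in the closed-form expression of case (2).
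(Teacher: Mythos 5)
Your outline tracks the paper's strategy at every step: reduce to the invariant de Rham complex $DR(D_p)^G$, compute its terms by counting irreducible $G$-summands common to $\Omega^i_X$ and $D_p$, and note that the resulting $q$-polynomial is supported in a single residue class so all differentials vanish. That last parity observation is correct and is precisely what closes each of the three cases in the paper.

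There are, however, two gaps. The minor one: your justification of Corollary~\ref{cor:invderham} via an Euler-vector-field contracting homotopy does not prove the corollary. A Cartan homotopy for the scaling $\C^*$ would at best reduce to the weight-zero piece of the $\C^*$-grading, which is much larger than the $G$-invariants. The actual mechanism (Lemma~\ref{lem:trivact}) is that the pushforward to a point is $G$-equivariant and $\lie$ maps to $0$ inside $\D_{pt}=\C$, so the connected group $G$ acts trivially on each $H^i_{dR}(M)$; exactness of $(-)^G$ for reductive $G$ then gives the reduction. The $\C^*$-contraction does appear in the paper, but at a later step (Corollary~\ref{cor:repfin}), to identify global intersection cohomology with the stalk at the origin. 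The major gap is the one you yourself flag: you do not carry out the identification of $\sum_i \dim(\Omega^i_X\oo D_p)^G\, q^i$ with the claimed Gaussian binomial. This is the substance of Theorems~\ref{thm:deRham-Dp}, \ref{thm:deRham-Dp-symm}, and \ref{thm:deRham-Dp-skew}: one must intersect the partitions arising in the plethysms \eqref{eq:extcauchy}, \eqref{eq:extsymm}, \eqref{eq:extskew} with the weight supports $\mc{A}(p)$, $\mc{C}^1(p)\cup\mc{C}^2(p)$, $\mc{B}(p)$ of $D_p$, parametrize the survivors by partitions in a rectangle so as to invoke \eqref{eq:qbin-genfun}, and --- in the symmetric case and in the even-$n$ skew case --- split by Durfee size into two contributions recombined via the $q$-Pascal recursion \eqref{eq:qbin-recursion}. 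Until those counts are performed, the stated formulas remain conjectural in your account.
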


\noindent{\bf Organization.} We have no additional input regarding the calculation of the multiplicities $m_{i,j}$, for which we refer the reader to the cited sources. In Section~\ref{sec:deRham} we discuss equivariant $\mc{D}$-modules and the invariant de Rham complex, and in Section~\ref{sec:prelim} we recall some basics on representations of the general linear group. We then discuss the calculation of Kazhdan--Lusztig polynomials via the invariant de Rham complex, along with the invariants $\chi_{i,j}$ and $e_{i,j}$: the case $X=X^{m,n}$ is treated in Section~\ref{sec:general-matrices}, the case $X=X^{n,\symm}$ in Section~\ref{sec:symmetric}, and the case $X=X^{n,\skew}$ in Section~\ref{sec:skew}.

\section{Equivariant $\mc{D}$-modules and the invariant de Rham complex}
\label{sec:deRham}

Let $X$ be an irreducible smooth complex affine variety of dimension $d$ and let $\mc{D}=\mc{D}_X$ denote the sheaf of differential operators on $X$. Throughout we make an identification between quasi-coherent $\mc{O}_X$-modules and their global sections. For a $\mc{D}$-module $M$, we consider the (algebraic) de Rham complex
\begin{equation}\label{eq:drcomp}
DR(M): \qquad 0 \lra M \lra \Omega^1_X \oo_{\mc{O}_X} M \lra \cdots \lra \Omega^{d}_X \oo_{\mc{O}_X} M \lra 0,
\end{equation}
where $\Omega^i_X$ is the module of $i$-differential forms and is placed in cohomological degree $i$. Writing $\pi_+(M)$ for the $\D$-module-theoretic derived integration (pushforward), we have an identification of the de Rham cohomology groups $H^i_{dR}(M)\simeq H^{i-d}(\pi_+ (M))$ for all $i$. In particular, if $M$ is holonomic then each $H^i_{dR}(M)$ is finite-dimensional \cite{htt}*{Theorem 3.2.3}.

The corresponding analytic de Rham complex plays a fundamental role in the Riemann--Hilbert correspondence \cite{htt}*{Theorem 7.2.5}. In the special case when $M=\mc{O}_X$ is the structure sheaf, the celebrated comparison theorem of Grothendieck \cite{grothendieck} implies that the space $H^i_{dR}(\mc{O}_X)$ agrees with the (singular) cohomology group $H^i(X,\C)$. More generally, for an irreducible closed subvariety $Y\subset X$ there is a corresponding simple $\mc{D}$-module $\mc{L}(Y,X)$ (called the Brylinski--Kashiwara module \cite{bry-kash}*{Section 8}) whose associated de Rham complex is, up to a shift, the middle perversity intersection cohomology complex $IC_Y^{\bullet}$. In particular, the de Rham cohomology groups of $\mc{L}(Y,X)$ agree with the intersection cohomology groups of $Y$ (see for instance \cite{htt}*{Theorem 7.1.1}). As we explain in Section~\ref{sec:invderham}, in the presence of a (connected, reductive) group action it suffices to work with the invariant de Rham complex in order to compute cohomology, the advantage being that this complex is in general much smaller than \eqref{eq:drcomp}.

\subsection{Equivariant $\D$-modules}\label{subsec:equivd}

In this section we provide some background on equivariant $\D$-modules (for more details, see \cite{lor-wal}*{Section~2.1}). We assume that $G$ is a connected algebraic group acting on $X$, and we say that $M$ is a (strongly) $G$-\defi{equivariant} $\D$-module if there exists a $\D_{G\times X}$-isomorphism
\begin{equation}\label{eq:tau-iso-pullbacksM}
\tau\colon p^*M \rightarrow m^*M,
\end{equation}
where $p$ and $m$ are the projection and multiplication maps
\[
p\colon G\times X\to X,\qquad\qquad m\colon G\times X\to X
\]
respectively, and $\tau$ satisfies the usual compatibility conditions on $G\times G\times X$ \cite{htt}*{Definition 11.5.2}.

We call a (possibly infinite-dimensional) vector space $V$ a \defi{rational $G$-module} if $V$ is equipped with a linear action of $G$, such that every $v\in V$ is contained in a finite-dimensional $G$-stable subspace $W$, where the $G$-action on $W$ is given by a morphism $G \to \GL(W)$ of algebraic varieties. If we let $\lie$ denote the Lie algebra of $G$, then by differentiating the action of $G$ on $X$ we get a map $\lie \to \D_X$. Equivariance of a $\D$-module $M$ amounts to $M$ having a rational $G$-module structure such that differentiating the action of $G$ on $M$ coincides with the action of $\lie$ induced from $\lie \to \D_X$. In particular, if a map $\tau$ as in \eqref{eq:tau-iso-pullbacksM} exists then it must be unique, hence the notion of a $\D$-module being equivariant should be thought of as a property the module, rather than as additional data. Therefore, coherent equivariant $\D$-modules form a full subcategory $\op{mod}_G(\D_X)$ of the category $\op{mod}(\D_X)$ of coherent $\D_X$-modules. Moreover, if $f$ is any $G$-equivariant map $f:X \to X'$ between smooth $G$-varieties $X,X'$, then the $\D$-module-theoretic direct image $f_+$ preserves equivariance.

\subsection{Invariant de Rham complex}\label{sec:invderham}

One of the most basic results in algebraic topology is that (co)homology groups of a topological space are homotopy-invariant. Moreover, homotopic maps between two spaces induce identical maps on the level of cohomology. An immediate consequence of this is that the action of a connected group $G$ on $X$ induces the trivial action on the cohomology groups $H^i(X,\C)$. This can be viewed as a special case (with $M=\mc{O}_X$) of the following result.

\begin{lemma}\label{lem:trivact}
Let $M$ be a $G$-equivariant $\D_X$-module, and assume that $G$ is connected. The induced action of $G$ on $H^i_{dR}(M)$ is trivial for all $i\geq 0$.
\end{lemma}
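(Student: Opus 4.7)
The plan is to reduce the statement to a vanishing assertion for the Lie algebra $\mf{g}$, and then establish the latter via a Cartan magic formula adapted to $\D$-modules. First, the $G$-equivariance of $M$, combined with the natural $G$-action on each $\Omega^i_X$ inherited from the action on $X$, endows every term $\Omega^i_X \otimes_{\mc{O}_X} M$ of $DR(M)$ with a rational diagonal $G$-action, and the de Rham differentials are readily checked to be $G$-equivariant. Each cohomology group $H^i_{dR}(M)$ therefore inherits a $G$-action, and since $G$ is connected it suffices to show that the induced $\mf{g}$-action vanishes, as a rational representation of a connected algebraic group is trivial if and only if its derived Lie algebra action is trivial.

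For $\xi \in \mf{g}$, let $\tilde\xi \in \Der_\C(\mc{O}_X)$ denote the associated fundamental vector field on $X$; under the map $\mf{g} \to \D_X$ coming from differentiating the $G$-action, $\xi$ is sent to $\tilde\xi$. Equivariance of $M$ identifies the action of $\xi \in \mf{g}$ on $M$ with the action of $\tilde\xi \in \D_X$ on $M$, so the induced action on $\Omega^i_X \otimes_{\mc{O}_X} M$ is
\[
\xi \cdot (\omega \otimes m) = L_{\tilde\xi}(\omega) \otimes m + \omega \otimes (\tilde\xi \cdot m),
\]
where $L_{\tilde\xi}$ denotes the Lie derivative on forms. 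The key tool is the contraction operator $\iota_{\tilde\xi} : \Omega^i_X \otimes_{\mc{O}_X} M \to \Omega^{i-1}_X \otimes_{\mc{O}_X} M$ given by $\omega \otimes m \mapsto (\iota_{\tilde\xi}\omega) \otimes m$, which will serve as a chain homotopy.

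What remains is to verify the Cartan magic formula $L_\xi = d \circ \iota_{\tilde\xi} + \iota_{\tilde\xi} \circ d$ on $DR(M)$. This is a local computation in coordinates $x_1, \ldots, x_d$: using the twisted differential $d(\omega \otimes m) = d\omega \otimes m + \sum_k (dx_k \wedge \omega) \otimes \partial_k m$, the classical Cartan identity on forms supplies the first summand $L_{\tilde\xi}(\omega) \otimes m$, while contracting the extra $dx_k \wedge (-)$ terms against $\tilde\xi$ reassembles (writing $\tilde\xi = \sum_k f_k \partial_k$ locally) into precisely the second summand $\omega \otimes (\tilde\xi \cdot m)$. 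Consequently every $\xi \in \mf{g}$ acts on $DR(M)$ through a null-homotopic chain map and hence as zero on cohomology, proving the lemma. The main obstacle is the sign and index bookkeeping in the local verification, ensuring that the $\D$-module twist in $d$ contributes exactly the second summand of the $\mf{g}$-action; conceptually it is a direct generalization of the classical Cartan formula.
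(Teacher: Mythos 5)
Your argument is correct, and it is genuinely different from the paper's. The paper proves the lemma abstractly: it factors $H^i_{dR}(M)$ as $H^{i-d}(\pi_+ M)$ for $\pi:X\to\{pt\}$, invokes the fact that $\D$-module pushforward preserves equivariance to conclude that each $H^{i-d}(\pi_+M)$ is an equivariant $\D_{pt}$-module, and then observes that the Lie algebra map $\lie\to\D_{pt}=\C$ is zero, so $\lie$ (hence connected $G$) acts trivially. You instead work directly with the complex $DR(M)$, exhibiting for each $\xi\in\lie$ an explicit chain homotopy $\iota_{\tilde\xi}$ satisfying the Cartan identity $L_\xi = d\circ\iota_{\tilde\xi}+\iota_{\tilde\xi}\circ d$, where $L_\xi$ is the diagonal $\lie$-action $L_{\tilde\xi}\oo 1 + 1\oo\tilde\xi$; a short local computation confirms that the extra term $\sum_k f_k\,\omega\oo\partial_k m$ produced by contracting the $\D$-module twist in $d$ is exactly $\omega\oo(\tilde\xi\cdot m)$, so the identity holds and $\xi$ acts nullhomotopically, hence trivially on cohomology. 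Your reduction from $G$-triviality to $\lie$-triviality is also sound for a rational module over a connected group: the stabilizer of any vector is a closed subgroup with full Lie algebra, hence equals $G$. The trade-off is that the paper's route is shorter and leverages the equivariance-preserving property of $\pi_+$ already set up in Section~\ref{subsec:equivd}, while yours is more elementary and self-contained, making the chain-level mechanism (Cartan homotopy for the twisted de Rham differential) explicit; your approach also does not require knowing that pushforward preserves equivariance.
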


\begin{proof}
Let $\pi: X\to \{pt\}$ be the map to the point, which is $G$-equivariant. Since the pushforward preserves equivariance (Section \ref{subsec:equivd}), the spaces $H^{k}(\pi_+ (M))$ are equivariant $\D_{pt}$-modules, for all $k\in\bb{Z}$. The map $\lie \to \D_{pt}=\C$ is zero, hence $\lie$, and consequently $G$ acts trivially on any equivariant $\D_{pt}$-module.  Since we have $H^i_{dR}(M)=H^{i-d}(\pi_+ (M))$ for all $i$, the desired conclusion follows.
\end{proof}

It is clear that the differentials in the de Rham complex (\ref{eq:drcomp}) are $G$-equivariant. The following result shows that in order to compute de Rham cohomology, it is enough to consider the $G$-invariant part of this complex.

\begin{corollary}\label{cor:invderham}
With the notation as in Lemma \ref{lem:trivact}, assume in addition that $G$ is reductive. For each $i$, we have that $H^i_{dR}(M)$ is isomorphic to the $i$th cohomology of the invariant de Rham complex
 \begin{equation}\label{eq:invdrcomp}
DR(M)^G:  \qquad 0 \lra M^G \lra (\Omega^1_X \oo M)^G \lra \cdots \lra (\Omega^{d}_X \oo M)^G \lra 0,
\end{equation}
\end{corollary}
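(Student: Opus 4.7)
The plan is to reduce the statement to the general principle that, for a connected reductive group $G$, taking $G$-invariants is an exact functor on the category of rational $G$-modules, and to combine this with Lemma~\ref{lem:trivact}. Concretely, I view the de Rham complex \eqref{eq:drcomp} as a complex in the category of rational $G$-modules, note that $(-)^G$ is exact on this category, so it commutes with cohomology, and then use the triviality of the induced $G$-action on $H^i_{dR}(M)$ to identify $H^i_{dR}(M)^G$ with $H^i_{dR}(M)$.

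The first step is to check that each term $\Omega^i_X \otimes_{\mc{O}_X} M$ carries a rational $G$-module structure, and that the differentials in \eqref{eq:drcomp} are $G$-equivariant. Since $X$ is a $G$-variety, $\Omega^i_X$ is a $G$-equivariant $\mc{O}_X$-module and its space of global sections is naturally a rational $G$-module; since $M$ is $G$-equivariant it is also rational as a $G$-module by the discussion in Section~\ref{subsec:equivd}; the diagonal action on the tensor product is then rational as well. The de Rham differentials are intrinsic (they involve only the $\mc{O}_X$-module and $\D_X$-module structures), both of which are $G$-equivariant, so the differentials are $G$-equivariant maps. Hence \eqref{eq:drcomp} is a complex of rational $G$-modules and \eqref{eq:invdrcomp} is obtained by applying the functor $(-)^G$ termwise.

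The second step uses that $G$ is connected and reductive: every rational $G$-module decomposes as a direct sum of its isotypic components, and $(-)^G$ is the projection onto the trivial isotypic component. This projection is given by a Reynolds-type operator and, as a direct summand functor, is exact. Applying this to the complex \eqref{eq:drcomp} shows that taking invariants commutes with cohomology, so
\[
H^i\bigl(DR(M)^G\bigr) \simeq H^i\bigl(DR(M)\bigr)^G = H^i_{dR}(M)^G.
\]

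Finally, by Lemma~\ref{lem:trivact} the induced $G$-action on $H^i_{dR}(M)$ is trivial, and therefore $H^i_{dR}(M)^G = H^i_{dR}(M)$, which combined with the previous display yields the claim. I do not anticipate a genuine obstacle: the only subtle point is that $\Omega^i_X \otimes_{\mc{O}_X} M$ may be infinite-dimensional, but exactness of $(-)^G$ for reductive $G$ on arbitrary rational representations is standard (via isotypic decomposition), so no finiteness hypothesis is needed.
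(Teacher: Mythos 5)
Your proposal is correct and follows exactly the same route as the paper: apply Lemma~\ref{lem:trivact} to identify $H^i_{dR}(M)$ with $H^i_{dR}(M)^G$, then use exactness of $(-)^G$ for reductive $G$ to commute invariants past cohomology of the de Rham complex. The extra detail you supply about $G$-equivariance of the differentials and rationality of the terms is implicit in the paper's one-line argument and is harmless.
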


\begin{proof}
By Lemma \ref{lem:trivact} we have $H^i_{dR}(M)=H^i_{dR}(M)^G=H^i(DR(M))^G=H^i(DR(M)^G)$, where the last equality follows from the fact that taking $G$-invariants is an exact functor when $G$ is reductive.
\end{proof}

Next, we illustrate the effectiveness of calculating with the invariant de Rham complex.

\begin{proposition}\label{prop:invdrfinite}
Suppose that $G$ is a connected, reductive group, acting on $X$ with finitely many orbits. For any $M\in \op{mod}_G(\D_X)$, each term in the complex $DR(M)^G$ from (\ref{eq:invdrcomp}) is finite-dimensional. 
\end{proposition}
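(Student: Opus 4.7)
The plan is to reduce the claim to the finite-dimensionality of the $G$-isotypic multiplicity spaces $\Hom_G(V_\lambda,M)$ of $M$, then to reduce $M$ to a simple equivariant $\mc{D}$-module via finite length, and finally to apply Frobenius reciprocity on the minimal extension.

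For the first reduction, Sumihiro-type linearization together with the semisimplicity of rational $G$-modules exhibits the coherent, $\mc{O}_X$-locally free, equivariant module $\Omega^i_X$ as a $G$-equivariant direct summand of $\mc{O}_X \oo_\bb{C} W$ for some finite-dimensional rational $G$-module $W$. Tensoring over $\mc{O}_X$ with $M$ and taking $G$-invariants,
\[
(\Omega^i_X \oo_{\mc{O}_X} M)^G \;\subseteq\; (W \oo_\bb{C} M)^G \;=\; \bigoplus_\lambda \Hom_G(V_\lambda, W^*) \oo \Hom_G(V_\lambda, M),
\]
a direct sum over the (finitely many) irreducible $G$-representations $V_\lambda$ appearing in $W^*$. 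It thus suffices to show that each $\Hom_G(V_\lambda,M)$ is finite-dimensional.

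For the second reduction, since $X$ has finitely many $G$-orbits and $M$ is equivariant, the characteristic variety of $M$ lies in the finite union $\bigcup_\alpha \overline{T^*_{Y_\alpha} X}$ of conormal bundles to orbits, so $M$ is holonomic. A standard devissage along the orbit stratification, combined with the fact that on a homogeneous space $G/H$ every coherent equivariant $\mc{D}$-module is an integrable connection (its $G$-stable characteristic variety must be the zero section), shows that $M$ has finite length. Exactness of $\Hom_G(V_\lambda,-)$ for reductive $G$ then lets us reduce to the case of simple $M$.

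A simple coherent equivariant $\mc{D}_X$-module is the minimal extension $j_{!*}\mathcal{V}$ of an irreducible equivariant connection $\mathcal{V}$ on an orbit $Y = G/H$, where $j\colon Y \hookrightarrow X$; such $\mathcal{V}$ corresponds to a finite-dimensional representation $V$ of the stabilizer $H$. By the defining property of the minimal extension, $j_{!*}\mathcal{V}$ has no nonzero subobject supported on $\overline{Y}\setminus Y$, so the natural restriction map $\Gamma(X, j_{!*}\mathcal{V}) \to \Gamma(Y, \mathcal{V}) = \operatorname{Ind}_H^G V$ is $G$-equivariantly injective. Frobenius reciprocity then gives
\[
\Hom_G(V_\lambda, j_{!*}\mathcal{V}) \;\hookrightarrow\; \Hom_G(V_\lambda, \operatorname{Ind}_H^G V) \;=\; \Hom_H(V_\lambda|_H, V),
\]
which is finite-dimensional since $V$ is. The principal obstacle is the finite-length step: it rests on the description of coherent equivariant $\mc{D}$-modules on a homogeneous space $G/H$ as $\mc{O}$-coherent objects (equivalently, finite-dimensional representations of $H$), and on a careful Kashiwara-style induction along the orbit stratification to patch these together.
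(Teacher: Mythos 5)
Your proof is correct, and both you and the paper reduce the claim to the same crucial fact: that $M$ has finite multiplicities as a rational $G$-module, i.e., that $\Hom_G(V_\lambda,M)$ is finite-dimensional for every irreducible $V_\lambda$. The difference is in how that fact is established. The paper simply \emph{cites} this as a known result, namely \cite{lor-wal}*{Proposition~3.14}, whereas you actually \emph{prove} it from scratch: you observe that holonomicity (from finitely many orbits) gives finite length, reduce to a simple equivariant $\mc{D}$-module by exactness of $\Hom_G(V_\lambda,-)$, identify such a simple as a minimal extension $j_{!*}\mathcal{V}$ of an irreducible equivariant connection on an orbit $G/H$, and then use injectivity of $\Gamma(X,j_{!*}\mathcal{V})\hookrightarrow\Gamma(Y,\mathcal{V})=\operatorname{Ind}_H^G V$ together with Frobenius reciprocity to bound $\Hom_G(V_\lambda,j_{!*}\mathcal{V})$ by the finite-dimensional $\Hom_H(V_\lambda|_H,V)$. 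This is a legitimate and more self-contained derivation; the cost is re-proving what the cited proposition already packages, including verifying the injectivity of the restriction map (which holds because its kernel is a $\mc{D}$-submodule supported on $\overline{Y}\setminus Y$, and minimal extensions have no such subobjects). There is also a minor difference in the first reduction: the paper works with a $G$-equivariant \emph{surjection} $\mc{O}_X\oo_{\C}V\twoheadrightarrow\Omega^i_X$ (which needs only finite generation and exactness of $G$-invariants), while you realize $\Omega^i_X$ as a $G$-equivariant \emph{direct summand} of $\mc{O}_X\oo_{\C}W$; the latter requires an averaging argument to split the sequence $G$-equivariantly, but is otherwise equivalent for locally free $\Omega^i_X$ on smooth affine $X$. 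Both reductions lead to the same multiplicity-counting formula, so the overall conclusion and shape of the argument agree with the paper.
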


\begin{proof}
We write $S=\mc{O}_X$, and we show that if $A$ is any finitely-generated $S$-module with a compatible rational $G$-module structure (such as $\Omega^i_X$), then $(A\oo_S M)^G$ is finite-dimensional. By choosing a $G$-equivariant finite dimensional space of generators $V$ of $A$, we get a surjective $G$-equivariant map of $S$-modules
\[S\oo_\C V \twoheadrightarrow A.\] 
This in turn induces a surjective map $(V\oo_\C M)^G\twoheadrightarrow (A\oo_S M)^G$. Hence, it is enough to prove that $(V\oo M)^G$ is finite dimensional. 

For an irreducible representation $Z$ of $G$, let $[V:Z]$ (resp. $[M:Z]$) denote the multiplicity of $Z$ in the $G$-decomposition of $V$ (resp. $M$). Let $\mc{I}$ denote the finite set of isomorphism classes of irreducible $G$-modules $W$ that appear in the decomposition of $V$ (that is, $[V:W]\geq 1$). We have 
\[\dim(V\oo M)^G = \sum_{W\in \mc{I}} [V:W]\cdot [M:W^\vee],\]
which is finite, since $M$ is a multiplicity-finite $G$-module by \cite{lor-wal}*{Proposition 3.14}.
\end{proof}

\begin{remark}
Under the assumptions of Proposition \ref{prop:invdrfinite}, any $M\in \op{mod}_G(\D_X)$ is in fact (regular) holonomic by \cite{htt}*{Theorem 11.6.1}, hence the finiteness of $h_{dR}^i(M)$ follows readily. Nonetheless, the conclusion of Proposition~\ref{prop:invdrfinite} is stronger, as the finiteness occurs already at the level of the complex $DR(M)^G$.
\end{remark}

We thus obtain a representation-theoretic approach for computing the intersection cohomology local Euler characteristic of strata at the most singular point.

\begin{corollary}\label{cor:repfin}
Assume that $X$ is a representation of a connected, reductive group $G$ with finitely many orbits. For an orbit $O \subset X$, let $D_O$ denote the corresponding Brylinski--Kashiwara $\D_X$-module. The intersection cohomology local Euler characteristic of $\ol{O}$ at $0$ is given by
\[\chi_{0, \ol{O}} \, = \, \sum_{i=0}^d \, (-1)^{d-i} \cdot \dim_{\C} \, (D_O \oo_{\C} \bigwedge^i X^{\vee})^G.\]
\end{corollary}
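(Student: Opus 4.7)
The plan is to thread together three standard reductions: a conic contraction identifying the local Euler characteristic at $0$ with a global one, Riemann--Hilbert to convert from $IC^\bullet_{\ol{O}}$ to $D_O$, and Corollary~\ref{cor:invderham} to replace global de Rham cohomology by the $G$-invariant de Rham complex, whose terms, since $X$ is a vector space, are precisely $(D_O \oo_{\C} \bigwedge^i X^\vee)^G$.

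First I would exploit that $X$ is a $G$-representation: the linear $\C^*$-scaling commutes with $G$, preserves every orbit closure, and contracts $X$ to $\{0\}$. Hence $IC^\bullet_{\ol{O}}$ is $\C^*$-monodromic (constructible with respect to the $\C^*$-invariant $G$-orbit stratification), and the standard contraction principle identifies the stalk cohomology at $0$ with the global hypercohomology, $H^i(IC^\bullet_{\ol{O}})_0 \cong \bb{H}^i(X, IC^\bullet_{\ol{O}})$; summing with signs yields
\[
\chi_{0,\ol{O}} = \sum_i (-1)^i \dim \bb{H}^i(X, IC^\bullet_{\ol{O}}).
\]
Next I would pass to the $\mc{D}$-module side. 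With the paper's conventions (in which $H^i_{dR}(\mc{O}_X) = H^i(X,\C)$ while $IC^\bullet_X = \C_X[d]$), the Brylinski--Kashiwara module $D_O$ satisfies $DR(D_O) \simeq IC^\bullet_{\ol{O}}[-d]$, so $\bb{H}^i(X, IC^\bullet_{\ol{O}}) = H^{i+d}_{dR}(D_O)$ and therefore
\[
\chi_{0,\ol{O}} = (-1)^d \sum_j (-1)^j\, h^j_{dR}(D_O).
\]

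Finally, Corollary~\ref{cor:invderham} identifies this alternating sum with the Euler characteristic of the invariant de Rham complex $DR(D_O)^G$, and Proposition~\ref{prop:invdrfinite} guarantees that each term $(\Omega^i_X \oo_{\mc{O}_X} D_O)^G$ is finite-dimensional, so the Euler characteristic may be computed termwise on the complex itself. Since $X$ is a vector space, $\Omega^i_X \cong \mc{O}_X \oo_{\C} \bigwedge^i X^\vee$ as $G$-equivariant $\mc{O}_X$-modules, giving $(\Omega^i_X \oo_{\mc{O}_X} D_O)^G = (D_O \oo_{\C} \bigwedge^i X^\vee)^G$; combining everything and using $(-1)^{d+i} = (-1)^{d-i}$ produces the stated formula. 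The main obstacle is simply bookkeeping on shifts and signs — in particular the Riemann--Hilbert shift by $d$ and the contraction principle — but both become routine once one observes that orbit closures are conical and that the stratification involved is $\C^*$-invariant, so no new technical input beyond Corollary~\ref{cor:invderham} and Proposition~\ref{prop:invdrfinite} is required.
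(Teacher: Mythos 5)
Your proposal is correct and follows essentially the same route as the paper: both rely on the $\C^*$-contraction to identify the stalk at $0$ with global intersection cohomology, the Riemann--Hilbert shift by $d$ to pass to de Rham cohomology of $D_O$, and then Corollary~\ref{cor:invderham} together with Proposition~\ref{prop:invdrfinite} to reduce the alternating sum to the terms of the invariant de Rham complex, with $\Omega^i_X \simeq \mc{O}_X \oo \bigwedge^i X^\vee$. The paper simply presents the chain of equalities in the opposite order (invariant complex $\Rightarrow$ global de Rham $\Rightarrow$ stalk), and your sign bookkeeping $(-1)^{d+i}=(-1)^{d-i}$ matches.
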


\begin{proof}
From Corollary \ref{cor:invderham} and Proposition \ref{prop:invdrfinite} we get
\[\sum_{i=0}^d (-1)^i \cdot h^i_{dR}(D_O) = \sum_{i=0}^d (-1)^i \cdot \dim_{\C} (D_O \oo_{\C} \bigwedge^i X^{\vee})^G.\]
Furthermore, from the discussion above we have $h^{i-d}\left(IC^{\bullet}_{\ol{O}}\right) = h^i_{dR}(D_O)\text{ for all }i$. Since $X$ has finitely many orbits, $O$ is stable under the action of $\C^*$. Then global intersection cohomology agrees with the stalk intersection cohomology at $0$ (see \cite{htt}*{(13.2.40)}), hence the conclusion.
\end{proof}

\section{Preliminaries on representations of the general linear group}\label{sec:prelim}

In this section we recall some basic facts and notation regarding partitions and the representation theory of $\GL_n(\C)$. We write $\bb{Z}^n_{dom}$ for the set of \defi{dominant weights} in $\bb{Z}^n$:
\[\bb{Z}^n_{dom} = \{\ll=(\ll_1,\cdots,\ll_n)\in\bb{Z}^n:\ll_1\geq\ll_2\geq\cdots\geq\ll_n\}.\]
When each $\ll_i\geq 0$, we identify $\ll$ with a \defi{partition} with (at most) $n$ parts, and write $\ll\in\bb{N}^n_{dom}$. We let $|\ll|:=\ll_1+\cdots+\ll_n$ denote the \defi{size} of the partition $\lambda$, and write $\lambda\vdash k$ when $|\lambda|=k$. We identify a partition $\lambda$ with its \defi{Young diagram}, consisting of left-justified rows of boxes, where row $i$ consists of $\lambda_i$ boxes. For example, $\lambda=(5,3,3,2)$ has Young diagram
\[\yng(5,3,3,2).\]

The \defi{Durfee size} of $\ll$ is the largest $s$ with the property $\ll_s\geq s$, and it corresponds to the largest $s\times s$ square contained in the Young diagram of $\lambda$. We have for instance that $\lambda=(5,3,3,2)$ has Durfee size $3$. We write $\lambda'$ for the \defi{conjugate} partition of $\lambda$, where $\ll'_i$ counts the number of parts $\ll_j$ with $\ll_j\geq i$. We note that the Young diagram of $\lambda'$ is obtained by transposing the one of $\lambda$, and that in particular, the Durfee size of $\lambda'$ is equal to that of $\lambda$. For example, we have
\[(5,3,3,2)' = (4,4,3,1,1).\] 
We partially order $\bb{Z}^n_{dom}$ (and $\bb{N}^n_{dom}$) by declaring that $\ll\geq \mu$ if $\ll_i\geq\mu_i$ for all $i=1,\cdots,n$. 
If $a\geq 0$ then we write $a\times b$ or $(b^a)$ for the sequence $(b,b,\cdots,b)$ where $b$ is repeated $a$ times. With this notation, the Durfee size of $\lambda\in\bb{N}^n_{dom}$ is the largest $s$ for which $\lambda\geq(s^s)$.

If $F$ is a vector space with $\dim(F)=n$ and $\ll\in\bb{Z}^n_{dom}$, then we write $\SS_{\ll}F$ for the corresponding irreducible representation of $\GL(F)$ (or \defi{Schur functor}). Our conventions are such that if $\ll=(k,0,\cdots,0)$ then we have $\SS_{\ll}F = \Sym^k F$, while for $\ll=(1^r,0^{n-r})$ we have $\bb{S}_{\ll}F=\bw^r F$. For a weight $\ll\in\bb{Z}^n$, we define its \defi{dual} to be
\[ \ll^{\vee} = (-\ll_n,-\ll_{n-1},\cdots,-\ll_1),\]
and we have an isomorphism
\begin{equation}\label{eq:Slam-dual}
    \SS_{\ll}(F^{\vee}) \simeq \SS_{\ll^{\vee}}(F)\text{ for all }\ll\in\bb{Z}^n_{dom}.
\end{equation}
Moreover, we have by Schur's lemma that if we let $G=\GL(F)$ then
\begin{equation}\label{eq:Schur-lemma}
\left(\SS_{\ll}F^{\vee} \oo \SS_{\mu}F\right)^G = \Hom_G\left(\SS_{\ll}F,\SS_{\mu}F\right) = \begin{cases}
 \bb{C} & \text{if }\ll=\mu;\\
 0 & \text{otherwise}.
\end{cases} 
\end{equation}

When $m\geq n$, we will think of $\bb{N}^n_{dom}$ as a subset of $\bb{N}^m_{dom}$ by adding trailing zeroes, and think of the union of all $\bb{N}^n_{dom}$ as the set of partitions. For a partition $\ll$ we can then think of $\bb{S}_{\ll}$ as a functor of finite dimensional vector spaces (of any dimension), with
\begin{equation}\label{eq:SllF=0}
\SS_{\ll}F = 0\text{ if }\lambda_i\neq 0\text{ for some }i>\dim(F).
\end{equation}
Note that when $\ll=(1^r)$, the formula above states the familiar fact that $\bw^r F = 0$ when $r>\dim(F)$.

\subsection{Plethysm formulas}
We next record some fundamental plethysm formulas that will be used in analyzing the invariant de Rham complex for matrix spaces. Suppose first that we have two finite dimensional vector spaces $F_1,F_2$. The structure of the exterior powers on $F_1\oo F_2$ as representations of $G=\GL(F_1)\times\GL(F_2)$ is governed by Cauchy's formula \cite{weyman}*{Corollary~2.3.3}:
\begin{equation}\label{eq:extcauchy}
\bw^i (F_1\oo F_2) = \bigoplus_{\ll \vdash i} \SS_{\ll} F_1 \otimes \SS_{\ll'} F_2, \quad \mbox { for all } i\geq 0.
\end{equation}
If we let $m=\dim(F_1)$ and $n=\dim(F_2)$, then using \eqref{eq:SllF=0} we can restrict \eqref{eq:extcauchy} to those partitions $\lambda$ with $\lambda\leq(n^m)$: if $\lambda_{m+1}\neq 0$ then $\SS_{\ll}F_1=0$, while if $\lambda_1>n$ then $\SS_{\ll'}F_2=0$.

The next plethysm formulas concern a single vector space $F$. We have by \cite{weyman}*{Proposition~2.3.9(a)}
\begin{equation}\label{eq:extsymm}
\bw^i\left(\Sym^2 F\right)=\bigoplus_{\ll \in \mc{Y}(2i)} \SS_{\ll} F, \quad \mbox { for all } i\geq 0,
\end{equation}
where $\mc{Y}(2i)$ denotes the set of partitions $\ll \vdash 2i$ of some Durfee size $r\geq 0$ for which there is a partition $\a \leq (n-r)^r$ such that 
\begin{equation}\label{eq:def-Y}
\ll_j = \begin{cases}
r+1+\a_j & \mbox{for }j=1,\cdots,r,\\
\a'_{j-r} & \mbox{for }j=r+1,\cdots,n,
\end{cases}
\quad\text{ or pictorially}\quad
\ll: \hspace{0.18in} \begin{aligned}
\begin{tikzpicture}
\draw (0,1.5)--(0,0)--(0.5,0)--(0.5,0.5)--(1.5,0.5)--(1.5,1)--(2,1)--(2,1.5);
\node at (0.8,1) {$\a'$};
\draw [thick] (0,1.5)--(0,3.5)--(2,3.5)--(2,1.5)--(0,1.5);
\node at (1,2.5) {$r\times r$};
\node at (2.25,2.87) {$r$};
\node at (2.25,2.5) {$\times$};
\node at (2.25,2.13) {$1$};
\draw (2,1.5)--(2.5,1.5)--(2.5,3.5)--(2,3.5);
\draw(2.5,1.5)--(3,1.5)--(3,2)--(3.5,2)--(3.5,3)--(4,3)--(4,3.5)--(2.5,3.5);
\node at (3,2.6) {$\a$};
\end{tikzpicture}
\end{aligned}
\end{equation}

Similarly, the exterior powers of $\bw^2 F$ are given by \cite{weyman}*{Proposition~2.3.9(b)}:
\begin{equation}\label{eq:extskew}
\bw^i\left(\bw^2 F\right)=\bigoplus_{\ll \in \mc{Z}(2i)} \SS_{\ll} F, \quad \mbox { for all } i\geq 0,
\end{equation}
where $\mc{Z}(2i)$ denotes the set of partitions $\ll \vdash 2i$ of some Durfee size $r\geq 0$ for which there is a partition $\a \leq (n-r-1)^r$ such that: \vspace{-0.15in}
\begin{equation}\label{eq:def-Z}
\ll_j = \begin{cases}
r+\a_j & \mbox{for }j=1,\cdots,r,\\
r & \mbox{for }j=r+1,\\
\a'_{j-r-1} & \mbox{for }j=r+2,\cdots,n,
\end{cases}
\quad\text{ or pictorially}\quad
\ll: \hspace{0.18in} \begin{aligned}
\begin{tikzpicture}
\draw (0,1.5)--(0,0)--(0.5,0)--(0.5,0.5)--(1.5,0.5)--(1.5,1)--(2,1)--(2,1.5);
\node at (0.80,1) {$\a'$};
\node at (1,1.73) {$1 \times r$};
\draw [thick] (0,2)--(0,4)--(2,4)--(2,2)--(0,2);
\draw (0,1.5)--(0,2)--(2,2)--(2,1.5)--(0,1.5);
\node at (1,3) {$r\times r$};
\draw(2,2)--(2.5,2)--(2.5,2.5)--(3,2.5)--(3,3.5)--(3.5,3.5)--(3.5,4)--(2,4);
\node at (2.5,3.1) {$\a$};
\end{tikzpicture}
\end{aligned}
\vspace{-0.1in}
\end{equation}

\subsection{$q$-binomial coefficients}\label{subsec:Gaussian}
For $a\geq b\geq 0$ we define the \defi{Gaussian (or $q$-)binomial coefficient} ${a\choose b}_q$ to be the polynomial in $\bb{Z}[q]$ defined as
\[{a\choose b}_q = \frac{(1-q^a)\cdot(1-q^{a-1})\cdots (1-q^{a-b+1})}{(1-q^b)\cdot(1-q^{b-1})\cdots (1-q)}.\]
These polynomials are generalizations of the usual binomial coefficients, satisfying the relations
\begin{equation}\label{eq:qbin-is-bin}
{a\choose b}_q = {a\choose a-b}_q,\quad{a\choose a}_q={a\choose 0}_q=1,\mbox{ and }{a\choose b}_1 = {a\choose b},
\end{equation}
and the recursion
\begin{equation}\label{eq:qbin-recursion}
{a\choose b}_q = {a-1\choose b-1}_q + {a-1\choose b}_q \cdot q^b.
\end{equation}
One significance of the $q$-binomial coefficients is that ${a\choose b}_{q^2}$ describes the Poincar\'e polynomial of the Grassmannian $\bb{G}(b,a)$ of $b$-dimensional subspaces of $\bb{C}^a$. As such, the coefficient of $q^j$ in ${a\choose b}_q$ computes the number of Schubert classes of (co)dimension $j$, or equivalently the number of partitions $\ll$ of size $j$ contained inside the rectangular partition $(a-b)\times b$. We get
\begin{equation}\label{eq:qbin-genfun}
{a\choose b}_q = \sum_{\ll\leq (b^{a-b})} q^{|\ll|}.
\end{equation}

\section{General matrices}
\label{sec:general-matrices}

In this section we consider $X=X^{m,n}$, the affine space of complex $m\times n$ matrices, and we assume that $m\geq n\geq 1$. For a coordinate independent notation, we consider complex vector spaces $F_1,F_2$, $\dim(F_1)=m$, $\dim(F_2)=n$, we let $S = \Sym(F_1\oo F_2)$ and identify $X$ with $\Spec(S)$. We write $G= \GL(F_1) \times \GL(F_2)$ and consider its natural action on $X$. The rank stratification of $X$ agrees with the orbit stratification relative to the $G$-action. The dimensions of the strata are given by
\[ d_p = \dim(X_p) = p\cdot(m+n-p) \text{ for }p=0,\cdots,n.\]

The module of differential forms on $X$ is naturally isomorphic to
\[ \Omega^1_X = F_1 \oo F_2 \oo S,\]
and using (\ref{eq:extcauchy}) with $\mu=\ll'$, we have for $0\leq i\leq mn$ that
\begin{equation}\label{eq:omegadet}
\Omega^i_X = \bw^i\Omega^1_X = \bigoplus_{\mu\vdash i} \bb{S}_{\mu'}F_1 \oo \bb{S}_{\mu}F_2 \oo S.
\end{equation}

Each stratum gives rise to a simple object in $\opmod_{G}(\D_X)$, the Brylinski--Kashiwara module $D_p := \mc{L}(V_p, X)$. As a $G$-representation, $D_p$ has a decomposition into a direct sum of irreducible representations given by (see \cite{raicu-dmods}*{Section~5}, and also \cite{raicu-weyman}*{Theorem~6.1}, \cite{raicu-weyman-loccoh}*{Main Theorem(1)}, \cite{raicu-survey}*{Theorem~5.1})
\begin{equation}\label{eq:decomp-Dp}
 D_p = \bigoplus_{\ll \in \mc{A}(p)}\bb{S}_{\ll(n-p)} F_1^{\vee} \oo \bb{S}_{\ll} F_2^{\vee},
\end{equation}
where 
\[ \mc{A}(p) = \{\ll\in\bb{Z}^n_{\dom} : \ll_{n-p}\geq m-p\mbox{ and }\ll_{n-p+1}\leq n-p\},\]
and where for $0\leq s\leq n$ we denote
\[\ll(s) = (\ll_1-(m-n),\cdots,\ll_s-(m-n),s^{m-n},\ll_{s+1},,\cdots,\ll_n)\in\bb{Z}^m.\]
Notice that for $\ll\in\mc{A}(p)$ we have that $\ll(n-p)$ is dominant, so the corresponding Schur functor in \eqref{eq:decomp-Dp} is defined. We are now ready to analyze the invariant de Rham complex (\ref{eq:invdrcomp}) and compute de Rham cohomology for the simples $D_p$.

\begin{theorem}\label{thm:deRham-Dp} For $0\leq p \leq n\leq m$, we have
\begin{equation}\label{eq:inv-deRham-Dp-general}
 \sum_{i\geq 0} \dim(\Omega^i_X\oo D_p)^{G} \cdot q^i = {n \choose p}_{q^2} \cdot q^{(m-p)\cdot(n-p)}.
\end{equation}
In particular, the differentials in the invariant de Rham complex $DR(D_p)^{G}$ are identically zero, and
\begin{equation}\label{eq:deRham-Dp}
\sum_{i\geq 0} h^i_{dR}(D_p) \cdot q^i =  {n \choose p}_{q^2} \cdot q^{(m-p)\cdot(n-p)}.
\end{equation}
\end{theorem}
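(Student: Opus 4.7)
The plan is to compute each invariant dimension $\dim(\Omega^i_X \oo D_p)^G$ combinatorially, and then deduce \eqref{eq:deRham-Dp} from \eqref{eq:inv-deRham-Dp-general} by a parity argument. Since $\Omega^i_X=\bw^i(F_1\oo F_2)\oo_\C S$ is a free $S$-module, I would first use the identification $\Omega^i_X \oo_{\mc{O}_X} D_p \cong \bw^i(F_1\oo F_2)\oo_\C D_p$ of $G$-representations. Combining Cauchy's formula \eqref{eq:extcauchy} with the decomposition \eqref{eq:decomp-Dp} of $D_p$ and Schur's lemma \eqref{eq:Schur-lemma} on both factors of $G=\GL(F_1)\times\GL(F_2)$ shows that a summand $\SS_\mu F_1\oo\SS_{\mu'}F_2$ (with $\mu\vdash i$, $\mu\leq(n^m)$) pairs with $\SS_{\lambda(n-p)}F_1^\vee\oo\SS_\lambda F_2^\vee$ to give a one-dimensional invariant precisely when $\mu=\lambda(n-p)$ and $\mu'=\lambda$; hence
\[\dim_\C(\Omega^i_X\oo D_p)^G=\#\{\lambda\in\mc{A}(p) : |\lambda|=i\text{ and }\lambda'=\lambda(n-p)\}.\]

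Next, I would parameterize $\lambda\in\mc{A}(p)$ as $\lambda=(\alpha_1+(m-p),\ldots,\alpha_{n-p}+(m-p),\beta_1,\ldots,\beta_p)$, where $\alpha$ is a partition with at most $n-p$ parts and $\beta\leq(n-p)^p$. A direct case analysis of column lengths, over the three ranges $1\leq j\leq n-p$, $n-p<j\leq m-p$, and $m-p<j\leq m$, yields
\[\lambda'_j=\begin{cases}(n-p)+\beta'_j, & 1\leq j\leq n-p,\\ n-p, & n-p<j\leq m-p,\\ \alpha'_{j-(m-p)}, & m-p<j\leq m.\end{cases}\]
Comparing block-by-block with $\lambda(n-p)$, whose three blocks are $(\alpha_j+(n-p))$, $(n-p)^{m-n}$, and $(\beta_{j-(m-p)})$, one sees that $\lambda'=\lambda(n-p)$ is equivalent to $\beta=\alpha'$. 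The Cauchy restriction $\mu\leq(n^m)$ translates to $\lambda_1\leq m$, i.e.\ $\alpha_1\leq p$, so $\alpha\leq p^{n-p}$ (and conversely $\beta=\alpha'\leq (n-p)^p$ is then automatic). Since $|\lambda|=2|\alpha|+(m-p)(n-p)$, identity \eqref{eq:qbin-genfun} gives
\[\sum_{i\geq 0}\dim(\Omega^i_X\oo D_p)^G\cdot q^i \;=\; q^{(m-p)(n-p)}\sum_{\alpha\leq p^{n-p}}q^{2|\alpha|} \;=\; {n\choose p}_{q^2}\cdot q^{(m-p)(n-p)},\]
which is \eqref{eq:inv-deRham-Dp-general}.

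Finally, this generating function is $q^{(m-p)(n-p)}$ times a polynomial in $q^2$, so all its nonzero terms live in a single parity class of $q$-degree. Since the differentials of $DR(D_p)^G$ strictly raise the degree by one, the source and target of any nontrivial differential would have opposite parities, forcing all differentials to vanish identically. Combined with Corollary~\ref{cor:invderham}, this yields \eqref{eq:deRham-Dp}. The main obstacle is the block-wise identification of $\lambda'$ with $\lambda(n-p)$: once the column-length computation above is pinned down, the bijection with sub-partitions $\alpha\leq p^{n-p}$ and the parity-based vanishing of differentials both follow cleanly.
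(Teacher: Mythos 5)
Your proposal is correct and follows essentially the same approach as the paper: combining Cauchy's formula, the decomposition of $D_p$, and Schur's lemma to reduce to a combinatorial count of partitions $\lambda\in\mc{A}(p)$ with $\lambda'=\lambda(n-p)$, then parameterizing by a partition $\alpha\leq(p^{n-p})$ and applying \eqref{eq:qbin-genfun}, and finally using parity vanishing to kill the differentials. The only cosmetic difference is that you compute $\lambda'$ block-by-block from the parameterization $\lambda=(\alpha+(m-p)^{n-p},\beta)$, whereas the paper describes the shape of $\mu'$ pictorially and reads off $\alpha,\beta$ from the Young diagram; both routes land on the same bijection $\beta=\alpha'$.
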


\begin{proof}
It follows from \eqref{eq:Schur-lemma}, \eqref{eq:omegadet}, and \eqref{eq:decomp-Dp} that $\dim(\Omega^i_X\oo D_p)^{G}$ is equal to the number of partitions $\mu\vdash i$ satisfying
\[\mu\in\mc{A}(p),\quad\text{and}\quad \mu'=\mu(n-p).\]
Note that the conditions $\mu_{n-p}\geq m-p\geq n-p$ and $\mu_{n-p+1}\leq n-p$ imply that $\mu$ has Durfee size $(n-p)$, while the condition $\mu'=\mu(n-p)$ is equivalent to
\begin{equation}\label{eq:mu-vs-muprime}
\mu'_j = \begin{cases}
 \mu_j - (m-n) & \mbox{for }j=1,\cdots,n-p, \\
  n-p & \mbox{for }j=n-p+1,\cdots,m-p,\\
 \mu_{j-(m-n)} & \mbox{for }j = m-p+1,\cdots m.\\
 \end{cases}
\end{equation}
Pictorially, the Young diagram of the partition $\mu'$ must have the following shape:
\[ \hspace{-0.3in} \mu': \hspace{0.2in}
\begin{aligned}
\begin{tikzpicture}
\draw (0,1.5)--(0,0)--(0.5,0)--(0.5,0.5)--(1.5,0.5)--(1.5,1)--(2,1)--(2,1.5);
\node at (0.75,1) {$\beta$};
\draw [thick] (0,2.5)--(0,4.5)--(2,4.5)--(2,2.5)--(0,2.5);
\draw (0,1.5)--(0,2.5)--(2,2.5)--(2,1.5)--(0,1.5);
\node at (1,2) {$(n-p)^{m-n}$};
\node at (1,3.5) {$(n-p)^{n-p}$};
\draw(2,2.5)--(2.5,2.5)--(2.5,3)--(3,3)--(3,4)--(3.5,4)--(3.5,4.5)--(2,4.5);
\node at (2.5,3.7) {$\a$};
\end{tikzpicture}
\end{aligned}\]
where $\alpha\in\bb{N}^{n-p}_{dom}$ is given by
\begin{equation}\label{eq:alpha-genl} 
\alpha_j = \mu'_j - (n-p) = \mu_j - (m-p)\text{ for }j=1,\cdots,n-p,
\end{equation}
and $\beta\in\bb{N}^p_{dom}$ satisfies (since the columns of $\mu'$ are the rows of $\mu$)
\begin{equation}\label{eq:beta-genl}
\beta'_j = \mu_j - (m-p) \text{ for }j=1,\cdots,n-p.
\end{equation}
Comparing \eqref{eq:mu-vs-muprime}, \eqref{eq:alpha-genl}, \eqref{eq:beta-genl}, we see that $\beta'=\alpha$, and therefore $\mu$ uniquely corresponds to $\alpha\leq \left(p^{n-p}\right)$. Since $\mu \vdash i$, we must have $i = 2|\a| + (m-p)\cdot(n-p)$. We obtain
\[
\sum_{i\geq 0} \dim(\Omega^i_X\oo D_p)^{G} \cdot q^i = 
\sum_{\alpha\leq(p^{n-p})} q^{2|\a| + (m-p)\cdot(n-p)} = {n \choose p}_{q^2} \cdot q^{(m-p)\cdot(n-p)},
\]
where the last equality follows from (\ref{eq:qbin-genfun}) and proves \eqref{eq:inv-deRham-Dp-general}.

Since the coefficient of $q^i$ is non-zero only for $i\equiv (m-p)\cdot(n-p)\mod 2$, it follows that every other term in the invariant de Rham complex $DR(D_p)^G$ is zero. This implies that the differentials are identically zero, and therefore
\[ h^i_{dR}(D_p) = \dim(\Omega^i_X\oo D_p)^{G}\text{ for all }i,\]
from which \eqref{eq:deRham-Dp} follows.
\end{proof}

As a corollary, we derive the following well-known formula for the intersection cohomology of determinantal varieties.

\begin{corollary}\label{cor:IC-general-mat}
 We have
 \[ \sum_{i\in\bb{Z}} h^i\left(IC^{\bullet}_{V_p}\right) \cdot q^i = q^{-d_p} \cdot {n\choose p}_{q^2}.\]
\end{corollary}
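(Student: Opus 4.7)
The plan is to combine Theorem~\ref{thm:deRham-Dp} with the Riemann--Hilbert identification (recalled in Section~\ref{sec:deRham}) that matches $DR(D_p)$ with $IC^{\bullet}_{V_p}$ up to a degree shift. More precisely, the relation I would use is $h^{i}_{dR}(D_p) = h^{i-\dim X}(IC^{\bullet}_{V_p})$ for all $i$, which is the same shift invoked in the proof of Corollary~\ref{cor:repfin}; here $\dim X = mn$. To pin down the sign of the shift I would sanity-check against the top stratum $p=n$: there $D_n = \mc{O}_X$, so $h^0_{dR}(D_n)=1$ and all other de Rham cohomology vanishes, while $IC^{\bullet}_X = \bb{C}_X[mn]$ has nonzero hypercohomology only in degree $-mn$. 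These match after the shift by $\dim X$.

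With the shift established, reindexing via $j=i-mn$ yields
\[
\sum_{j\in\bb{Z}} h^j(IC^{\bullet}_{V_p}) \cdot q^j \;=\; q^{-mn} \sum_{i\geq 0} h^i_{dR}(D_p) \cdot q^i,
\]
and substituting Theorem~\ref{thm:deRham-Dp} gives $q^{-mn}\cdot\binom{n}{p}_{q^2}\cdot q^{(m-p)(n-p)}$. A direct simplification of the exponent,
\[
-mn + (m-p)(n-p) \;=\; p^2 - p(m+n) \;=\; -p(m+n-p) \;=\; -d_p,
\]
collapses this to $q^{-d_p}\binom{n}{p}_{q^2}$, which is the claimed formula.

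There is no substantive obstacle: once Theorem~\ref{thm:deRham-Dp} is available, the corollary is a formal restatement under Riemann--Hilbert. The only minor subtlety to flag is that the hypercohomology of $IC^{\bullet}_{V_p}$ on $X$ coincides with the stalk cohomology at the origin, thanks to the $\bb{C}^*$-contraction of $X$ onto $0$ noted in the proof of Corollary~\ref{cor:repfin}, so either reading of $h^i(IC^{\bullet}_{V_p})$ yields the same Poincar\'e polynomial. A posteriori, one can also verify consistency with the local intersection cohomology Euler characteristic theorem stated in the introduction: specialising $q\mapsto -1$ in $q^{-d_p}\binom{n}{p}_{q^2}$ recovers $(-1)^{d_p}\binom{n}{p} = \chi_{0,p}$, as required.
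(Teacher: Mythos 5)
Your argument is correct and matches the paper's own proof essentially verbatim: apply the shift $h^{i-mn}(IC^{\bullet}_{V_p}) = h^i_{dR}(D_p)$ from the Riemann--Hilbert identification and simplify the exponent $(m-p)(n-p) - mn = -d_p$. The sanity checks you add (the $p=n$ case and the $q=-1$ specialisation) are sound but not needed beyond what the paper does.
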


\begin{proof}
 Since $\dim(X)=mn$, we have by the discussion in Section~\ref{sec:deRham} that 
 \[h^{i-mn}\left(IC^{\bullet}_{V_p}\right) = h^i_{dR}(D_p)\text{ for all }i,\]
 hence the conclusion follows from the fact that $(m-p)\cdot(n-p)-mn=-d_p$.
\end{proof}

We now explain the calculation of the invariants $\chi_{i,j}$ and $e_{i,j}$ discussed in the Introduction.

\begin{corollary}\label{cor:chi-eij-general}
 For $0\leq i\leq j\leq n$ we have that
 \[\chi_{i,j} = (-1)^{d_j}\cdot {n-i \choose j-i} \text{ and }e_{i,j} = {n-i \choose j-i}.\]
\end{corollary}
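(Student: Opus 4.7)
The plan proceeds in three steps: compute the base case $\chi_{0,p}$, reduce the general $\chi_{i,j}$ to this base case in a smaller matrix space, and then derive $e_{i,j}$ from the microlocal index formula \eqref{eq:X=EM}.

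For $\chi_{0,p}$, I would evaluate Corollary~\ref{cor:IC-general-mat} at $q=-1$ and invoke \eqref{eq:qbin-is-bin} to obtain the global intersection cohomology Euler characteristic $(-1)^{d_p}\cdot{n\choose p}$ of $V_p$. By the $\bb{C}^*$-contraction argument used at the end of the proof of Corollary~\ref{cor:repfin}, this coincides with the stalk Euler characteristic $\chi_{0,p}$.

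For general $i$, I would use the local product structure of the rank stratification. At any $x_i\in X_i$, a $G$-equivariant transverse slice to $X_i$ is analytically isomorphic to the smaller matrix space $X^{m-i,n-i}$, and $V_j$ restricts on this slice to $V_{j-i}^{(m-i,n-i)}$. By the normally nonsingular product structure of IC sheaves near a stratum point,
\[ (IC^{\bullet}_{V_j})_{x_i} \simeq \bb{C}[d_i]\otimes (IC^{\bullet}_{V_{j-i}^{(m-i,n-i)}})_0, \]
with the shift reflecting the smooth $X_i$-direction of dimension $d_i$. Passing to Euler characteristics and applying the base case in the smaller space yields
\[ \chi_{i,j} = (-1)^{d_i + d^{(m-i,n-i)}_{j-i}}\cdot {n-i\choose j-i}, \]
and a direct computation using $d_k = k(m+n-k)$ shows $d_i + d^{(m-i,n-i)}_{j-i} = d_j$, establishing the asserted formula for $\chi_{i,j}$.

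Finally, for $e_{i,j}$ I would appeal to \eqref{eq:X=EM}. By part (1) of the theorem on characteristic cycles, $m_{k,j}=\delta_{k,j}$, so $\mc{M}$ is diagonal with $(k,k)$-entry $(-1)^{d_k}$. The identity $\mc{X}=\mc{E}\cdot\mc{M}$ then collapses to $\chi_{i,j} = (-1)^{d_j}\cdot e_{i,j}$, and the formula for $\chi_{i,j}$ gives $e_{i,j} = {n-i\choose j-i}$. The main subtle ingredient is the normally nonsingular local splitting at $x_i$: this rests on the standard fact that determinantal varieties admit a $G$-equivariant analytic product decomposition along each rank stratum, together with the corresponding behavior of IC sheaves under such splittings; everything else amounts to bookkeeping with signs and the known base case.
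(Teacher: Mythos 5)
Your proposal is correct and follows essentially the same route as the paper's own proof. The paper also establishes $\chi_{0,j}$ via the $\bb{C}^*$-contraction applied to Corollary~\ref{cor:IC-general-mat} at $q=-1$, reduces the case $i>0$ using the local product decomposition of $X\setminus V_{i-1}$ as $X^{m-i,n-i}\times B$ (citing \cite{lor-rai}*{Section~2H}) together with $IC^{\bullet}_{V^{m-i,n-i}_{j-i}\times B} = IC^{\bullet}_{V^{m-i,n-i}_{j-i}} \boxtimes IC^{\bullet}_B$ and $IC^{\bullet}_B=\bb{C}[\dim B]$, and derives $e_{i,j}$ from $\chi_{i,j}=(-1)^{d_j}e_{i,j}$, which is the collapse of \eqref{eq:local-index} (equivalently \eqref{eq:X=EM}) in the generic-matrix case where $\mc{M}$ is diagonal; the only cosmetic differences are that the paper establishes the $\chi/e$ equivalence first rather than last, phrases the reduction as an induction on matrix size, and notes the alternative derivation of $e^{m,n}_{i,j}=e^{m-i,n-i}_{0,j-i}$ directly from MacPherson's axioms. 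Your dimension bookkeeping $\dim B=d_i$ and the identity $d_i+d^{(m-i,n-i)}_{j-i}=d_j$ are precisely what make the signs match, exactly as in the paper.
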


\begin{proof}
 Recall that the microlocal indices $m_{i,j}$ are non-zero only when $i=j$, in which case $m_{i,i}=1$. It follows from \eqref{eq:local-index} that
 \[ \chi_{i,j} = (-1)^{d_j}\cdot e_{i,j}\text{ for all }i,j,\]
 so the desired formula for $\chi_{i,j}$ is equivalent to that for $e_{i,j}$.
 
 Consider first the case $i=0$, and note that $h^i(IC^{\bullet}_{V_j}) = h^i(IC^{\bullet}_{V_j,0})$: since $V_j$ is invariant under the scaling action of $\bb{C}^*$, the global intersection cohomology agrees with the stalk intersection cohomology at $0$ (see for instance \cite{htt}*{(13.2.40)}). The conclusion $\chi_{0,j}=(-1)^{d_j}{n\choose j}$ then follows from Corollary~\ref{cor:IC-general-mat} by taking Euler characteristic by setting $q=-1$ (cf. Corollary \ref{cor:repfin}). If $i>0$ then we can compute the invariants after restricting to the open set $X\setminus V_{i-1}$. This space is locally isomorphic to $X^{m-i,n-i}\times B$ for a smooth base $B$, via an isomorphism compatible with the stratification: $X_j=X^{m,n}_j$ corresponds to $X^{m-i,n-i}_{j-i}\times B$ (see for instance \cite{lor-rai}*{Section~2H}). Using the fact that $IC^{\bullet}_{V^{m-i,n-i}_{j-i}\times B} = IC^{\bullet}_{V^{m-i,n-i}_{j-i}} \boxtimes IC^{\bullet}_B$, and $IC^{\bullet}_B = \bb{C}[\dim B]$, it follows that if $(0,b)\in X^{m-i,n-i}_{j-i}\times B$ is the point corresponding to $x_i\in X^{m,n}_i$ then
 \[ h^k\left(IC^{\bullet}_{V^{m,n}_{j},x_i}\right) = h^k\left(IC^{\bullet}_{V^{m-i,n-i}_{j-i}\times B,(0,b)}\right) = h^{k-dim(B)}\left(IC^{\bullet}_{V^{m-i,n-i}_{j-i},0}\right).\]
By taking Euler characteristic, we then obtain 
\[ \chi_{i,j}^{m,n} = \chi_{0,j-i}^{m-i,n-i}\cdot(-1)^{\dim B}\]
and the desired formula for $\chi_{i,j}^{m,n}$ follows by induction on the size of the matrix space. Alternatively, using properties 1. and 3. for local Euler obstructions from \cite{macpherson}*{Section~3}, we get that $e^{m,n}_{i,j} = e^{m-i,n-i}_{0,j-i}$.
\end{proof}

\begin{remark}\label{rem:other} We conclude this section with several remarks:
\begin{itemize}
\item[(a)] Via the identification of $X^{m,n}$ with the big opposite cell in the Grassmannian $\bb{G}(n,m+n)$, it follows from \cite{las-sch}*{Section~11} that the intersection cohomology groups in Corollary~\ref{cor:IC-general-mat} (as well as their local versions) are computed by Kazhdan--Lusztig polynomials. The resulting Gaussian polynomials in our formulas are then precisely the ones appearing in \cite{las-sch}*{Lemme~10.1}.

\item[(b)] As mentioned in Section~\ref{subsec:Gaussian}, the Gaussian polynomials are Poincar\'e polynomials of Grassmann varieties. The determinantal variety $V_p$ has a small resolution given by a vector bundle over the Grassmannian $\bb{G}(p,n)$ (see also \cite{zel-small}, \cite{weyman}*{Proposition~6.1.1}, \cite{per-rai}*{Section~5.2}), which gives another explanation for the formula in Corollary~\ref{cor:IC-general-mat}.

\item[(c)] If we write $\mc{F}$ for the Fourier transform (see \cite{htt}*{Section~3.2.2}, \cite{raicu-dmods}*{Section~2.5}, \cite{lor-wal}*{Section~4.3}), then we have $\mc{F}(D_p) \cong D_{n-p}$ for all $p$. The formula in (\ref{eq:deRham-Dp}) can then be recovered from the case $t=0$ of \cite{lor-rai}*{Theorem 1.1} via an iteration of \cite{htt}*{Proposition 3.2.6}, which implies that
\[
H^{k}(\pi_+ (M)) \cong H^{k}(Li^*\mc{F}(M)),
\]
where $\pi : X \to \{0\}$ is the projection and $i : \{0\} \to X$ the inclusion.

\item[(d)] The parity-vanishing of de Rham cohomology in (\ref{eq:deRham-Dp}) follows from the vanishing of odd-dimensional intersection cohomology groups for spherical varieties  \cite{bri-josh}.
\end{itemize}
\end{remark}

\section{Symmetric matrices}
\label{sec:symmetric}

In this section we consider the space $X=X^{n,\symm}$ of $n\times n$ symmetric matrices. We let $F$ be a vector space with $\dim(F)=n$, let $S = \Sym(\Sym^2 F)$, and make the identification $X=\Spec(S)$. Let $G=\GL(F)$ and consider its natural action on $X$, so that the rank stratification agrees with the orbit stratification for the $G$-action. The dimensions of the strata are given by
\begin{equation}\label{eq:dp-symm}
d_p = \frac{p\cdot(2n-p+1)}{2}\text{ for }p=0,\cdots,n.
\end{equation}
By (\ref{eq:extsymm}) the modules of differential forms are described by
\begin{equation}\label{eq:omegasym}
\Omega^i_X = \bw^i\Omega^1_X = \bigoplus_{\mu\in \mc{Y}(2i)} \SS_\mu F \oo S.
\end{equation}

We consider as in Section~\ref{sec:general-matrices} the simple modules $D_p:=\mc{L}(V_p,X)$. Their decomposition as a direct sum of irreducible $G$-representations is computed in \cite{raicu-dmods}*{Theorem~4.1}, and is given as follows.

\begin{enumerate}
    \item If $n-p$ is odd then
\begin{equation}\label{eq:symm-Dp-odd}
 D_p = \bigoplus_{\ll \in \mc{C}^1(p)}\bb{S}_{\ll} F^{\vee}
\end{equation}
where
\[\mc{C}^1(p) =\{\ll\in\bb{Z}^{n}_{\dom}:\ll_i\overset{(\opmod\ 2)}{\equiv} 0\rm{ for }i=1,\cdots,n,\ \ll_{n-p}\geq n-p+1\geq\ll_{n-p+2}\}.\]
    \item If $n-p$ is even then
\begin{equation}\label{eq:symm-Dp-even}
 D_p = \bigoplus_{\ll \in \mc{C}^2(p)}\bb{S}_{\ll} F^{\vee}
\end{equation}
where
\[\mc{C}^2(p) =\left\{\ll\in\bb{Z}^{n}_{\dom}:\ll_i\overset{(\opmod\ 2)}{\equiv}
\begin{cases}
 1 & \rm{for }i\leq n-p\\
 0 & \rm{for }i\geq n-p+1
\end{cases},
\ll_{n-p}\geq n-p+1,\ll_{n-p+1}\leq n-p
\right\}.\]
\end{enumerate}

\begin{theorem}\label{thm:deRham-Dp-symm} 
We let $m=\lfloor n/2 \rfloor$, and for $0\leq p \leq n$, we set $s=\lfloor p/2\rfloor$, and
\[ \epsilon = \begin{cases}
1 & \text{if }p=2s\text{ is even and }n=2m+1\text{ is odd}; \\
0 & \text{otherwise}.
\end{cases}
\]
We have
\begin{equation}\label{eq:invariant-deRham-Dp-symm}
 \sum_{i\geq 0} \dim(\Omega^i_X\oo D_p)^{G} \cdot q^i = {m+\epsilon \choose s}_{q^4} \cdot q^{{n-p+1\choose 2}}.
\end{equation}
In particular, the differentials in the invariant de Rham complex $DR(D_p)^{G}$ are identically zero, and
\begin{equation}\label{eq:deRham-Dp-symm}
\sum_{i\geq 0} h^i_{dR}(D_p) \cdot q^i =  {m+\epsilon \choose s}_{q^4} \cdot q^{{n-p+1\choose 2}}.
\end{equation}
\end{theorem}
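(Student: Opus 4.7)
The plan is to adapt the strategy of Theorem~\ref{thm:deRham-Dp} to the symmetric setting. First I would combine Schur's lemma \eqref{eq:Schur-lemma} with the decomposition \eqref{eq:omegasym} of $\Omega^i_X$ and with \eqref{eq:symm-Dp-odd} or \eqref{eq:symm-Dp-even} (according to the parity of $n-p$) to identify $\dim(\Omega^i_X\oo D_p)^G$ with the number of partitions $\mu\in\mc{Y}(2i)$ that also satisfy the parity and boundary conditions defining $\mc{C}^1(p)$ or $\mc{C}^2(p)$. The aim is then to compute this count as a polynomial in $q$ and match it with the right-hand side of \eqref{eq:invariant-deRham-Dp-symm}.

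Next I would analyze the Durfee size $r$ of an admissible $\mu$. The inequalities on $\mu_{n-p}$ and $\mu_{n-p+1}$ (resp.\ $\mu_{n-p+2}$) built into $\mc{C}^2(p)$ (resp.\ $\mc{C}^1(p)$) force $r=n-p$ when $n-p$ is even, while when $n-p$ is odd either $r=n-p$ or $r=n-p+1$ is possible. For each admissible Durfee size, I would translate the parity conditions on $\mu_j=r+1+\a_j$ (for $j\le r$) and $\mu_j=\a'_{j-r}$ (for $j>r$) from \eqref{eq:def-Y} into conditions on the auxiliary partition $\a\le(n-r)^r$. In the diagonal branches ($n-p$ even, or $n-p$ odd with $r=n-p$) this forces every row \emph{and} every column of $\a$ to be even, so $\a$ is tiled by $2\times 2$ blocks and takes the form $(2\b_1,2\b_1,2\b_2,2\b_2,\ldots)$ for some partition $\b\le(s)^t$ with an appropriate $t\in\{m-s,\,m-s-1\}$. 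In the remaining branch ($n-p$ odd, $r=n-p+1$) the conditions become: rows of $\a$ odd and columns of $\a$ even; stripping off the necessarily-full leftmost column of $\a$ reduces this to the rows-even/columns-even situation for a partition in a slightly smaller rectangle.

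A direct calculation using $|\mu|=r(r+1)+2|\a|$ and $|\a|=4|\b|$ (plus $r$ in the stripped-column branch) then yields $i={n-p+1\choose 2}+4|\b|$ in the $r=n-p$ branch and $i={n-p+1\choose 2}+2(n-p+1)+4|\b|$ in the $r=n-p+1$ branch. Summing over $\b$ inside the appropriate rectangle via \eqref{eq:qbin-genfun} produces a $q^4$-binomial in each branch, and when $n-p$ is odd the two contributions should combine through the Pascal recursion \eqref{eq:qbin-recursion} (with $q$ replaced by $q^4$) into the single $q^4$-binomial ${m+\epsilon\choose s}_{q^4}$, establishing \eqref{eq:invariant-deRham-Dp-symm}. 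Since every $q$-exponent appearing with nonzero coefficient in \eqref{eq:invariant-deRham-Dp-symm} shares the parity of ${n-p+1\choose 2}$, every other term of the invariant de Rham complex $DR(D_p)^G$ vanishes; hence the differentials are identically zero and \eqref{eq:deRham-Dp-symm} follows from Corollary~\ref{cor:invderham}. The main obstacle will be the $r=n-p+1$ branch when $n-p$ is odd, where the unfamiliar rows-odd/columns-even constraint on $\a$ has to be converted (via the column-stripping trick) into the standard rows-even/columns-even condition so that its contribution is compatible with the Pascal recursion and merges cleanly with that of the $r=n-p$ branch.
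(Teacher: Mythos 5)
Your proposal follows the same route as the paper's proof: reduce via Schur's lemma to counting partitions in $\mc{Y}(2i)\cap\mc{C}^j(p)$, observe the forced Durfee size(s), convert the parity constraints on $\ll$ into rows-even/columns-even conditions on $\a$ (tiling by $2\times 2$ blocks to get the auxiliary $\b$ and the $q^4$-binomial), handle the $r=n-p+1$ branch by stripping the forced first column, merge the two contributions in the odd case via the $q$-binomial Pascal recursion, and conclude the differentials vanish because the nonzero degrees are spaced apart. The bookkeeping of exponents and rectangle sizes you sketch is consistent with the paper's computation, so this is essentially the paper's proof.
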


\begin{proof} We separate our analysis into two cases, according to the parity of $n-p$, and proceed as in the proof of Theorem~\ref{thm:deRham-Dp}. We have using \eqref{eq:Schur-lemma} and \eqref{eq:extsymm} that $\dim(\Omega^i_X\oo D_p)^{G}$ is equal to the number of partitions $\ll\in\mc{Y}(2i)$ with the property that $\bb{S}_{\ll}F^{\vee}$ appears as a summand of $D_p$. We consider any such $\ll$ and write $r$ for its Durfee size.

If $n-p$ is even then we have by \eqref{eq:symm-Dp-even} that $\ll\in\mc{C}^2(p)$, hence $\ll_{n-p}\geq n-p+1$, $\ll_{n-p+1}\leq n-p$, forcing $r=n-p$. With the notation \eqref{eq:def-Y}, we have $\alpha\leq (p^{n-p})$, and the condition $\ll\in\mc{C}^2(p)$ is then equivalent to the fact that both $\alpha$ and $\alpha'$ have even parts. The choice of $\alpha$ is then equivalent to that of a partition $\beta\leq\left( s^{(n-p)/2}\right)$ with $4|\beta|=|\alpha|$ (recall that $s=\lfloor p/2\rfloor$), via the rule
\[\alpha_{2i-1}=\alpha_{2i} = 2\beta_i\text{ for all }i\geq 1.\]
Using \eqref{eq:qbin-genfun}, we conclude that
\[\sum_{i\geq 0} \dim(\Omega^i_X\oo D_p)^{G} \cdot q^i = {\frac{n-p}{2} + s \choose s}_{q^4} \cdot q^{\frac{(n-p)\cdot(n-p+1)}{2}}.\]
To see that this agrees with \eqref{eq:invariant-deRham-Dp-symm}, we first note that $n-p$ even implies that $\epsilon=0$. If $n=2m$ is even then $p=2s$, hence $\frac{n-p}{2} + s = m$, while if $n=2m+1$ is odd then $p=2s+1$, and we have again $\frac{n-p}{2} + s = m$.

If $n-p$ is odd then we have by \eqref{eq:symm-Dp-odd} that $\ll\in\mc{C}^1(p)$. The condition $\ll_{n-p}\geq n-p+1\geq\ll_{n-p+2}$ implies that $r\in\{n-p,n-p+1\}$, so we have two type of contributions to $(\Omega^i_X\oo D_p)^{G}$. Suppose first that $r=n-p$, and let $\alpha$ as in \eqref{eq:def-Y}. We have $\alpha\leq(p^{n-p})$, and the condition $\ll\in\mc{C}^1(p)$ is again equivalent to the fact that both $\alpha$ and $\alpha'$ have even parts. The choice of $\alpha$ is equivalent as before to that of a partition $\beta\leq\left( s^{(n-p-1)/2}\right)$ with $4|\beta|=|\alpha|$, and varying $\alpha$ and using \eqref{eq:qbin-genfun} we get a contribution to $\sum_{i\geq 0} \dim(\Omega^i_X\oo D_p)^{G}\cdot q^i$ of
\begin{equation}\label{eq:1st-contr-symm}
{\frac{n-p-1}{2} + s \choose s}_{q^4} \cdot q^{\frac{r^2+r}{2}} = {\frac{n-p-1}{2} + s \choose s}_{q^4} \cdot q^{\frac{(n-p)\cdot(n-p+1)}{2}}.
\end{equation}
Suppose next that $r=n-p+1$, so that $\alpha\leq\left((p-1)^{n-p+1}\right)$, $\alpha$ has odd parts, and $\alpha'$ has even parts. If we define $\ol{\alpha}$ by $\ol{\alpha}_i = \alpha_i-1$ for $i=1,\cdots,r$ then $|\ol{\alpha}|=|\alpha|-r$, $\ol{\alpha}\leq\left((p-2)^{n-p+1}\right)$, and both $\ol{\alpha}$ and $\ol{\alpha}'$ have even parts. The choice of $\ol{\alpha}$ is then equivalent to that of $\beta\leq\left((s-1)^{(n-p+1)/2}\right)$ with $4|\beta|=|\ol{\alpha}|$. We get from \eqref{eq:qbin-genfun} the second contribution to $\sum_{i\geq 0} \dim(\Omega^i_X\oo D_p)^{G}\cdot q^i$ of
\begin{equation}\label{eq:2nd-contr-symm}
{\frac{n-p+1}{2} + s-1 \choose s-1}_{q^4} \cdot q^{\frac{r^2+r}{2}+r} = {\frac{n-p+1}{2} + s-1 \choose s-1}_{q^4} \cdot (q^4)^{\frac{n-p+1}{2}} \cdot q^{\frac{(n-p)\cdot(n-p+1)}{2}}.
\end{equation}
Using \eqref{eq:qbin-is-bin}, and the recursion \eqref{eq:qbin-recursion} with $a=(n-p+1)/2+s$ and $b=(n-p+1)/2$, we get
\[{\frac{n-p-1}{2} + s \choose s}_{q^4} + {\frac{n-p+1}{2} + s-1 \choose s-1}_{q^4} \cdot (q^4)^{\frac{n-p+1}{2}} = {\frac{n-p+1}{2} + s \choose s}_{q^4},\]
which combined with \eqref{eq:1st-contr-symm} and \eqref{eq:2nd-contr-symm} implies that
\[\sum_{i\geq 0} \dim(\Omega^i_X\oo D_p)^{G} \cdot q^i = {\frac{n-p+1}{2} + s \choose s}_{q^4} \cdot q^{\frac{(n-p)\cdot(n-p+1)}{2}}.\]
To see that this agrees with \eqref{eq:invariant-deRham-Dp-symm}, we consider two cases. If $n=2m+1$ then $p=2s$ and hence $\epsilon=1$, showing that $\frac{n-p+1}{2} + s = m+1=m+\epsilon$. If $n=2m$ then $p=2s+1$ and hence $\epsilon=0$, which shows that $\frac{n-p+1}{2} + s = m = m+\epsilon$, as desired.

It follows from \eqref{eq:invariant-deRham-Dp-symm} that there are no non-zero consecutive terms in the invariant de Rham complex, which forces the differentials to be zero,  which in turn implies \eqref{eq:deRham-Dp-symm} and concludes our proof. 
\end{proof}

We can now derive the formula for the intersection cohomology of symmetric determinantal varieties.

\begin{corollary}\label{cor:IC-symmetric-mat}
 With the notation in Theorem~\ref{thm:deRham-Dp-symm}, we have
 \[ \sum_{i\in\bb{Z}} h^i\left(IC^{\bullet}_{V_p}\right) \cdot q^i = q^{-d_p} \cdot {m+\epsilon \choose s}_{q^4}.\]
\end{corollary}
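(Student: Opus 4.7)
The plan is to read this formula off directly from Theorem~\ref{thm:deRham-Dp-symm}, mirroring exactly the derivation of Corollary~\ref{cor:IC-general-mat}: the only ingredient needed beyond the theorem itself is the standard degree shift between the de Rham cohomology of a Brylinski--Kashiwara module and the intersection cohomology of its support.

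Concretely, I would first invoke the identification recalled in Section~\ref{sec:deRham}, namely that for all $i\in\bb{Z}$ one has
\[ h^{i - \dim X}\bigl(IC^{\bullet}_{V_p}\bigr) \, = \, h^i_{dR}(D_p). \]
In the present setting $X = \Sym^2(F)$ has dimension $\binom{n+1}{2}$, so multiplying the generating function in \eqref{eq:deRham-Dp-symm} by $q^{-\binom{n+1}{2}}$ produces $\sum_{j\in\bb{Z}} h^j\bigl(IC^{\bullet}_{V_p}\bigr) \, q^j$. All that then remains is to verify the elementary arithmetic identity
\[ \binom{n-p+1}{2} - \binom{n+1}{2} \, = \, -d_p, \]
which follows immediately from the formula \eqref{eq:dp-symm} for $d_p$ (the left-hand side simplifies to $-p(2n-p+1)/2$).

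I do not anticipate any real obstacle here: all of the substantive work has been carried out in Theorem~\ref{thm:deRham-Dp-symm}, in particular the explicit $G$-decomposition of $D_p$ from \eqref{eq:symm-Dp-odd}--\eqref{eq:symm-Dp-even}, the plethysm \eqref{eq:extsymm}, the two-case Durfee-size analysis depending on the parity of $n-p$, and the $q$-binomial recursion \eqref{eq:qbin-recursion} used to combine the two contributions. The present corollary is simply the reformulation of that result on the intersection cohomology side via the Riemann--Hilbert correspondence, and the $\bb{C}^*$-equivariance of $V_p$ further guarantees that the global and stalk-at-$0$ intersection cohomology coincide, should one wish to state the local form of the identity as well.
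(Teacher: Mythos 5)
Your proposal is correct and matches the paper's proof of Corollary~\ref{cor:IC-symmetric-mat} essentially verbatim: both apply the shift $h^{i-\dim X}(IC^{\bullet}_{V_p})=h^i_{dR}(D_p)$ with $\dim X=\binom{n+1}{2}$ and reduce to the arithmetic identity $\binom{n-p+1}{2}-\binom{n+1}{2}=-d_p$, which is immediate from \eqref{eq:dp-symm}.
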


\begin{proof}
 Since $\dim(X)={n+1\choose 2}$, and $h^{i-\dim(X)}\left(IC^{\bullet}_{V_p}\right) = h^i_{dR}(D_p)$, it suffices by Theorem~\ref{thm:deRham-Dp-symm} to show that
 \[{n-p+1 \choose 2} - {n+1\choose 2} = -d_p,\]
 which follows from \eqref{eq:dp-symm}.
\end{proof}

We end this section with the calculation of the invariants $\chi_{i,j}$ and $e_{i,j}$.

\begin{corollary}\label{cor:chi-eij-symmetric}
 For $0\leq i\leq j\leq n$ we have that
\[ \chi_{i,j} = (-1)^{d_j}\cdot\displaystyle{\lfloor \frac{n-i}{2}\rfloor + \epsilon \choose \lfloor\frac{j-i}{2}\rfloor},\text{ where }
     \epsilon = \begin{cases}
    1 & \text{if }(j-i)\text{ is even and }(n-i)\text{ is odd}; \\
    0 & \text{otherwise}.
    \end{cases}
\]
Moreover, the local Euler obstructions are given by
 \[e_{i,j} = \begin{cases}
 0 & \text{ if }n-i\text{ is even and }n-j\text{ is odd}; \\
 \displaystyle{\lfloor\frac{n-i}{2}\rfloor \choose \lfloor\frac{j-i}{2}\rfloor} & \text{otherwise}.
 \end{cases}.\]
\end{corollary}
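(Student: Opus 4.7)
The plan proceeds in two steps: first derive the formula for $\chi_{i,j}$ from Corollary~\ref{cor:IC-symmetric-mat} via a slice argument, then extract $e_{i,j}$ by inverting the identity $\mc{X} = \mc{E}\cdot\mc{M}$ of~\eqref{eq:X=EM}, using the description of $\mc{M}$ from the Theorem on characteristic cycles in the Introduction.

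For the first step, I mimic the slice argument at the end of the proof of Corollary~\ref{cor:chi-eij-general}. A transversal slice to the stratum $X_i$ at a point $x_i\in X_i$ is analytically isomorphic to $X^{n-i,\symm}$ and sends $V_j$ to $V_{j-i}^{n-i,\symm}$; consequently a Zariski-open neighborhood $U$ of $x_i$ in $X^{n,\symm}\setminus V_{i-1}$ factors as $X^{n-i,\symm}\times B$ for a smooth base $B$ of dimension $d_i$, respecting strata. From $IC^{\bullet}_{V_j\cap U} = IC^{\bullet}_{V_{j-i}^{n-i,\symm}}\boxtimes \C_B[\dim B]$ and Euler characteristics of stalks we obtain $\chi_{i,j}^{n,\symm} = (-1)^{d_i}\cdot\chi_{0,\,j-i}^{n-i,\symm}$. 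Setting $q=-1$ in Corollary~\ref{cor:IC-symmetric-mat} (which turns the $q^4$-binomial into an ordinary binomial) yields $\chi_{0,\,j-i}^{n-i,\symm} = (-1)^{d'_{j-i}}\binom{\lfloor(n-i)/2\rfloor+\epsilon}{\lfloor(j-i)/2\rfloor}$ with $\epsilon$ as in the statement, and a direct check using~\eqref{eq:dp-symm} confirms $d_i + d'_{j-i} = d_j$, which completes the formula for $\chi_{i,j}$.

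For the second step, the Theorem on characteristic cycles shows that $\mc{M}$ is upper bidiagonal: its diagonal entries are $(-1)^{d_k}$, its superdiagonal entries $\mc{M}_{j-1,j}$ equal $(-1)^{d_{j-1}}$ precisely when $n-j$ is odd, and all other entries vanish. Expanding the $(i,j)$-entry of $\mc{X} = \mc{E}\cdot\mc{M}$ and using $d_j - d_{j-1} = n-j+1$ (so $(-1)^{d_j-d_{j-1}} = 1$ whenever $n-j$ is odd) produces the recurrence
\[
e_{i,j} \;=\; (-1)^{d_j}\chi_{i,j} \;-\; [\,n-j\text{ odd}\,]\cdot e_{i,j-1}.
\]
The claim now follows by induction on $j-i$, splitting according to the parities of $n-i$ and $n-j$. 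The only nontrivial case is $n-i$ odd with $j-i$ even: by the first step, $(-1)^{d_j}\chi_{i,j} = \binom{\lfloor(n-i)/2\rfloor+1}{(j-i)/2}$, by induction $e_{i,j-1} = \binom{\lfloor(n-i)/2\rfloor}{(j-i)/2-1}$, and Pascal's identity $\binom{a+1}{k}-\binom{a}{k-1} = \binom{a}{k}$ delivers the claim; the subcase $n-i$ even with $n-j$ odd produces an exact cancellation, yielding $e_{i,j}=0$ as predicted by the first branch of the formula.

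I expect the main obstacle to be the parity bookkeeping: the correction $\epsilon$ in the $\chi$-formula, the indicator $[\,n-j\text{ odd}\,]$ coming from the characteristic cycle, and the two-branch definition of $e_{i,j}$ must all mesh consistently. Once the recurrence is in place, however, the verification reduces to a single use of Pascal's rule together with a few immediate parity identities.
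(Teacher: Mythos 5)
Your proposal is correct and follows essentially the same route as the paper: first compute $\chi_{i,j}$ via the $\C^*$-action and the slice reduction to the origin, then extract $e_{i,j}$ from the microlocal index formula using the bidiagonal structure of $\mc{M}$ together with the parity identity $d_j-d_{j-1}=n-j+1$ and Pascal's rule. One small point worth noting: you correctly carry the sign $(-1)^{\dim B}=(-1)^{d_i}$ in the slice identity $\chi^{n,\symm}_{i,j}=(-1)^{d_i}\chi^{n-i,\symm}_{0,j-i}$ and verify $d_i+d'_{j-i}=d_j$, whereas the published proof writes $\chi^{n,\symm}_{i,j}=\chi^{n-i,\symm}_{0,j-i}$ without the sign — your version is the careful one, and the final answer is unaffected precisely because of the identity you check; also, you derive $e_{i,j}$ for general $i$ directly from the recurrence rather than first reducing $e_{i,j}$ to $e_{0,j-i}$, which is a trivial repackaging of the same induction.
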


\begin{proof}
 As in the proof of Corollary~\ref{cor:chi-eij-general}, the formula for $\chi_{0,j}$ follows from Corollary~\eqref{cor:IC-symmetric-mat} by taking $p=j$ and $q=-1$, and observing that via the $\bb{C}^*$-action, the global intersection cohomology agrees with the local one at the origin. For $i>0$ we do induction on $n$, using the fact that $\chi_{i,j}^{n,\symm}=\chi_{0,j-i}^{n-i,\symm}$ and $e_{i,j}^{n,\symm}=e_{0,j-i}^{n-i,\symm}$, which follows as in the proof of Corollary~\ref{cor:chi-eij-general} using that $X^{n,\symm}\setminus V_{i-1}$ is locally isomorphic to $X^{n-i,\symm}\times B$ for a smooth base $B$.
 
 It remains to check the formulas for $e_{0,j}$, for which we apply \eqref{eq:local-index} with $i=0$. Using the Theorem on characteristic cycles from the Introduction, we note that when $n-j$ is even, $m_{i,j}=0$ for all $i\neq j$, hence
 \[ e_{0,j} = (-1)^{d_j}\cdot \chi_{0,j} = \displaystyle{\lfloor\frac{n}{2}\rfloor \choose \lfloor\frac{j}{2}\rfloor},\]
 where the last equality uses the fact that $\epsilon=0$ (if $j$ was even and $n$ odd then $n-j$ would be odd). 
 
 Suppose now that $n-j$ is odd, so that $m_{j-1,j}=m_{j,j}=1$ and $m_{i,j}=0$ for $i<j-1$. We have by \eqref{eq:local-index} that
 \[ \chi_{0,j} = (-1)^{d_{j-1}}\cdot e_{0,j-1} + (-1)^{d_j}\cdot e_{0,j}.\]
 Since we already know $\chi_{0,j}$ and $e_{0,j-1}$, and since \eqref{eq:dp-symm} implies $d_j-d_{j-1} = n+1-j$ is even, we get
 \[e_{0,j} = {\lfloor \frac{n}{2}\rfloor + \epsilon \choose \lfloor\frac{j}{2}\rfloor} - 
 {\lfloor \frac{n}{2}\rfloor \choose \lfloor\frac{j-1}{2}\rfloor}.
 \]
 If $n$ is even then $j$ is odd, hence $\epsilon=0$ and $\lfloor\frac{j}{2}\rfloor=\lfloor\frac{j-1}{2}\rfloor$, and therefore $e_{0,j}=0$, as desired. If $n$ is odd then $j$ is even, hence $\epsilon=1$ and $\lfloor\frac{j-1}{2}\rfloor=\lfloor\frac{j}{2}\rfloor-1$, so
 \[e_{0,j} = {\lfloor \frac{n}{2}\rfloor + 1 \choose \lfloor\frac{j}{2}\rfloor} - 
 {\lfloor \frac{n}{2}\rfloor \choose \lfloor\frac{j}{2}\rfloor-1} = {\lfloor \frac{n}{2}\rfloor \choose \lfloor\frac{j}{2}\rfloor},
 \]
 which concludes our proof.
\end{proof}

\section{Skew-symmetric matrices} \label{sec:skew}

In this section we consider the space $X=X^{n,\skew}$ of $n\times n$ skew-symmetric matrices. We let $F$ be a vector space with $\dim(F)=n$, let $m=\lfloor{n/2}\rfloor$, $S = \Sym(\bigwedge^2 F)$, and identify $X=\Spec(S)$. We write $G=\GL(F)$ and consider its natural action on $X$, so that the rank stratification on $X$ agrees with the orbit stratification for the $G$-action. The dimensions of the strata are given by
\begin{equation}\label{eq:dp-skew}
d_p = p\cdot(2n-2p-1)\text{ for }p=0,\cdots,m.
\end{equation}
By (\ref{eq:extskew}) the modules of differential forms are described by
\begin{equation}\label{eq:omegaskew}
\Omega^i_X = \bw^i\Omega^1_X = \bigoplus_{\mu\in \mc{Z}(2i)} \SS_\mu F \oo S.
\end{equation}
The decomposition into irreducible $G$-representations of the modules $D_p=\mc{L}(V_p, X)$ is (see \cite{raicu-dmods}*{Section~6})
\begin{equation}\label{eq:decomp-Dp-skew}
 D_p = \bigoplus_{\ll \in \mc{B}(p)}\bb{S}_{\ll} F^{\vee},
\end{equation}
where if $n=2m$ is even then
\[
\mc{B}(p) = \{\ll\in\bb{Z}^{2m}_{\dom}:\ll_{n-2p}\geq n-2p-1,\ll_{n-2p+1}\leq n-2p,\ll_{2i-1}=\ll_{2i}\rm{ for all }i\},
\]
and if $n=2m+1$ is odd then
\[
\mc{B}(p) = \{\ll\in\bb{Z}^{2m+1}_{\dom}:\ll_{n-2p}=n-2p-1,\ll_{2i-1}=\ll_{2i}\rm{ for }i\leq m-p,\ll_{2i}=\ll_{2i+1}\rm{ for }i>m-p\}.
\]
We note that for $p=m$ we have $D_m=S$ and the formula in (\ref{eq:decomp-Dp-skew}) is classical \cite{weyman}*{Proposition 2.3.8}.

\begin{theorem}\label{thm:deRham-Dp-skew} For $0\leq p \leq m$, we have
\begin{equation}\label{eq:invariant-deRham-Dp-skew}
\sum_{i\geq 0} \dim(\Omega^i_X\oo D_p)^{G} \cdot q^i = {m \choose p}_{q^4} \cdot q^{{n\choose 2}-p(2n-2p-1)}.
\end{equation}
In particular, the differentials in the invariant de Rham complex $DR(D_p)^{G}$ are identically zero, and
\begin{equation}\label{eq:deRham-Dp-skew}
\sum_{i\geq 0} h^i_{dR}(D_p) \cdot q^i =  {m \choose p}_{q^4} \cdot q^{{n\choose 2}-p(2n-2p-1)}.
\end{equation}
\end{theorem}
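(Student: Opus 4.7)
Following the template of Theorems~\ref{thm:deRham-Dp} and \ref{thm:deRham-Dp-symm}, I combine \eqref{eq:Schur-lemma}, \eqref{eq:omegaskew}, and \eqref{eq:decomp-Dp-skew} to reduce the computation of $\dim(\Omega^i_X\otimes D_p)^G$ to counting partitions $\mu\in\mc{Z}(2i)\cap\mc{B}(p)$. For such a $\mu$ with Durfee size $r$, the shape description \eqref{eq:def-Z} gives $\mu_{r+1}=r$, expresses $\mu$ in terms of the auxiliary partition $\alpha\leq((n-r-1)^r)$, and yields the size identity $|\mu|=r^2+r+2|\alpha|$. Combining this with the defining inequalities of $\mc{B}(p)$ --- namely $\mu_{n-2p}\geq n-2p-1$ and $\mu_{n-2p+1}\leq n-2p$, which are implied in both parities of $n$ --- one quickly shows that $r\in\{n-2p-1,n-2p\}$ when $n=2m$ is even, while the sharper equality $\mu_{n-2p}=n-2p-1$ available when $n=2m+1$ is odd forces $r=n-2p-1$ uniquely.

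The next step is to translate the pairing requirements of $\mc{B}(p)$ into conditions on $\alpha$ and $\alpha'$. In each subcase one verifies that the rows and columns of $\alpha$ come in pairs, possibly up to a single offset row or offset column dictated by the shape of $\mu$, so that $\alpha$ is reconstructed from a partition $\beta$ fitting inside a suitable rectangle via $\alpha_{2k-1}=\alpha_{2k}=2\beta_k$. Specifically, for $n=2m$ the odd Durfee size $r=n-2p-1$ forces $\alpha_r=0$ and gives $\beta\leq(p^{m-p-1})$ with $|\alpha|=4|\beta|$; the even Durfee size $r=n-2p$ forces $\alpha'_1=r$, so $\alpha$ contains a full first column, and after removing it we obtain $\beta\leq((p-1)^{m-p})$ with $|\alpha|=4|\beta|+r$. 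For $n=2m+1$ the (unique) even Durfee size $r=n-2p-1$ is automatically compatible with $\mu_{n-2p}=n-2p-1$, and the remaining pairing conditions yield $\beta\leq(p^{m-p})$ with $|\alpha|=4|\beta|$. In each case \eqref{eq:qbin-genfun} converts the sum over $\beta$ into a power of $q$ times a $q^4$-binomial coefficient.

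Assembling the pieces: for $n=2m+1$ the single case gives $q^{(m-p)(2m-2p+1)}\cdot{m\choose p}_{q^4}$, while for $n=2m$ the two subcases sum to
\[
q^{(m-p)(2m-2p-1)}\cdot{m-1\choose p}_{q^4} \;+\; q^{(m-p)(2m-2p-1)+4(m-p)}\cdot{m-1\choose p-1}_{q^4},
\]
which collapses to $q^{(m-p)(2m-2p-1)}\cdot{m\choose p}_{q^4}$ by the $q$-Pascal recursion \eqref{eq:qbin-recursion} combined with the symmetry ${a\choose b}_q={a\choose a-b}_q$ from \eqref{eq:qbin-is-bin}. A short arithmetic check shows that these exponents agree with ${n\choose 2}-p(2n-2p-1)$ in both parities, establishing \eqref{eq:invariant-deRham-Dp-skew}. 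Since the only non-zero degrees in this expression all share the same parity, no two consecutive terms of the invariant de Rham complex $DR(D_p)^G$ are simultaneously non-zero, so all its differentials vanish and \eqref{eq:deRham-Dp-skew} follows. The main obstacle I anticipate is the careful bookkeeping for even $n$: correctly identifying the offset first column forced by $\alpha'_1=r$ in the $r=n-2p$ subcase, and then matching the two Durfee-size contributions so that the $q$-Pascal recursion cleanly produces ${m\choose p}_{q^4}$.
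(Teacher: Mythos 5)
Your proposal is correct and follows essentially the same route as the paper: you reduce to counting $\mu\in\mc{Z}(2i)\cap\mc{B}(p)$ via \eqref{eq:Schur-lemma}, \eqref{eq:omegaskew}, \eqref{eq:decomp-Dp-skew}, split on the Durfee size $r$ (one case for odd $n$, two for even $n$), translate the pairing conditions of $\mc{B}(p)$ into parity/pairing constraints on $\alpha$ and $\alpha'$ (with $\alpha_r=0$ in the odd-$r$ subcase and a forced first column removed in the even-$r$ subcase), convert to $\beta$-counts via \eqref{eq:qbin-genfun}, and collapse the even-$n$ sum with the $q$-Pascal recursion — exactly as in the paper. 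The parity-of-degrees argument for the vanishing of the differentials at the end also coincides with the paper's.
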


\begin{proof} If we combine \eqref{eq:Schur-lemma}, \eqref{eq:extskew}, and \eqref{eq:decomp-Dp-skew} then it follows that $\dim(\Omega^i_X\oo D_p)^{G}$ is equal to the number of partitions $\ll\in\mc{Y}(2i) \cap \mc{B}(p)$. We consider any such $\ll$ and write $r$ for its Durfee size, and divide our analysis according to the parity of $n$.

If $n=2m+1$ then the condition $\lambda_{n-2p}=n-2p-1$ forces $r=n-2p-1=2(m-p)$. If we let $\alpha$ as in \eqref{eq:def-Z}, then we have $\alpha\leq\left((2p)^{2(m-p)}\right)$ and the condition $\lambda\in \mc{B}(p)$ is equivalent to the fact that both $\alpha$ and $\alpha'$ have even parts. Therefore, $\alpha$ corresponds to a partition $\beta\leq (p^{m-p})$ with $4|\beta|=|\alpha|$, as in the proof of Theorem~\ref{thm:deRham-Dp-symm}, and we get
\[\sum_{i\geq 0} \dim(\Omega^i_X\oo D_p)^{G} \cdot q^i = {m \choose p}_{q^4} \cdot q^{\frac{r^2+r}{2}}.\]
This agrees with \eqref{eq:invariant-deRham-Dp-symm} because
\[{n\choose 2}-p(2n-2p-1) = m(2m+1)-p(4m-2p+1) = (m-p)(2m-2p+1) = \frac{r(r+1)}{2}.\]

Suppose now that $n=2m$, where the inequalities $\ll_{n-2p}\geq n-2p-1,\ll_{n-2p+1}\leq n-2p$ imply that $r\in\{n-2p-1,n-2p\}$. Consider first the case $r=n-2p-1$ and let $\a$ as in \eqref{eq:def-Z}. We have $\alpha\leq\left((2p)^{n-2p-1}\right)$, and since $r$ is odd we get
\[ r+\alpha_r = \ll_{r} = \ll_{r+1} = r,\]
forcing $\alpha_r = 0$. It follows that $\alpha\leq\left((2p)^{2m-2p-2}\right)$, and both $\alpha,\alpha'$ have even parts, and using the standard correspondence with partitions $\beta\leq \left(p^{m-p-1}\right)$ we get a contribution to $\sum_{i\geq 0} \dim(\Omega^i_X\oo D_p)^{G} \cdot q^i$ of
\begin{equation}\label{eq:1st-contr-skew}
{m-1 \choose p}_{q^4} \cdot q^{\frac{r^2+r}{2}} = {m-1 \choose p}_{q^4} \cdot q^{(m-p)\cdot(n-2p-1)}.
\end{equation}
Consider next the case $r=n-2p$ and let $\a$ as in \eqref{eq:def-Z}, so that $\alpha\leq \left((2p)^{n-2p}\right)$. Since $r$ is even, we have
\[ r = \ll_{r+1} = \ll_{r+2} = \alpha'_1,\]
so $\alpha_i\geq 1$ for $1\leq i\leq r$. We define $\ol{\alpha}$ by $\ol{\alpha}_i=\alpha_i-1$, so $|\ol{\alpha}|=|\alpha|-r$ and $\ol{\alpha}\leq \left((2p-1)^{n-2p}\right)$. Now $\ol{\alpha}$ and $\ol{\alpha}'$ have even parts, so $\ol{\alpha}$ corresponds to a partition $\beta \leq \left((p-1)^{m-p}\right)$ with $4|\beta|=|\ol{\alpha}|$. We get a contribution to $\sum_{i\geq 0} \dim(\Omega^i_X\oo D_p)^{G} \cdot q^i$ of
\begin{equation}\label{eq:2nd-contr-skew}
{m-1 \choose p-1}_{q^4} \cdot q^{\frac{r^2+r}{2}+r} = {m-1 \choose p-1}_{q^4} \cdot (q^4)^{(m-p)} \cdot q^{(m-p)\cdot(n-2p-1)}.
\end{equation}
Summing \eqref{eq:1st-contr-skew} and \eqref{eq:2nd-contr-skew} and using \eqref{eq:qbin-recursion} with $a = m$ and $b=m-p$ we get
\[\sum_{i\geq 0} \dim(\Omega^i_X\oo D_p)^{G} \cdot q^i = {m \choose p}_{q^4} \cdot q^{(m-p)\cdot (n-2p-1)}.\]
This agrees with \eqref{eq:invariant-deRham-Dp-symm} because
\[{n\choose 2}-p(2n-2p-1) = m(2m-1)-p(4m-2p-1) = (m-p)(2m-2p-1).\]

Finally, \eqref{eq:deRham-Dp-skew} follows from \eqref{eq:invariant-deRham-Dp-symm} because $DR(D_p)^G$ contains no two consecutive non-zero terms.
\end{proof}

As for the other matrix spaces, we get a formula for intersection cohomology as follows.

\begin{corollary}\label{cor:IC-skew-mat}
 With the notation in Theorem~\ref{thm:deRham-Dp-skew}, we have
 \[ \sum_{i\in\bb{Z}} h^i\left(IC^{\bullet}_{V_p}\right) \cdot q^i = q^{-d_p} \cdot {m \choose p}_{q^4}.\]
\end{corollary}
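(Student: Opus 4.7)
The plan is to mirror exactly the strategy used to establish Corollaries~\ref{cor:IC-general-mat} and \ref{cor:IC-symmetric-mat}: reduce the claim about intersection cohomology to the de Rham cohomology computation already carried out in Theorem~\ref{thm:deRham-Dp-skew}, via the standard shift relating $IC^\bullet_{V_p}$ to the Brylinski--Kashiwara module $D_p$.

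First, I would record the two numerical ingredients. Since $X = X^{n,\skew}$ is the space of $n\times n$ skew-symmetric matrices, we have $\dim(X) = \binom{n}{2}$. The relationship recalled in Section~\ref{sec:deRham} between the intersection cohomology complex of $V_p$ and the de Rham cohomology of its associated simple $\mc{D}$-module reads
\[ h^{i-\dim(X)}\bigl(IC^{\bullet}_{V_p}\bigr) \,=\, h^i_{dR}(D_p) \quad \text{for all } i. \]
Substituting $j = i - \dim(X)$ gives
\[ \sum_{i\in\bb{Z}} h^i\bigl(IC^{\bullet}_{V_p}\bigr) \cdot q^i \,=\, q^{-\dim(X)} \cdot \sum_{i\geq 0} h^i_{dR}(D_p) \cdot q^i. \]

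Next, I would plug in the formula from Theorem~\ref{thm:deRham-Dp-skew}, giving
\[ \sum_{i\in\bb{Z}} h^i\bigl(IC^{\bullet}_{V_p}\bigr) \cdot q^i \,=\, q^{-\binom{n}{2}} \cdot {m \choose p}_{q^4} \cdot q^{\binom{n}{2} - p(2n-2p-1)} \,=\, q^{-p(2n-2p-1)} \cdot {m \choose p}_{q^4}. \]
Finally, the dimension formula \eqref{eq:dp-skew} gives $d_p = p(2n-2p-1)$, so the exponent of $q$ simplifies to $-d_p$, yielding the claimed identity.

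There is no real obstacle here: all the work was done in Theorem~\ref{thm:deRham-Dp-skew}, and the corollary is just a bookkeeping exercise matching the cohomological shift to the dimension of $V_p$. The only thing to verify carefully is that the exponent ${n\choose 2} - p(2n-2p-1)$ coming from the de Rham side, after subtracting $\dim(X) = \binom{n}{2}$, reproduces exactly $-d_p$, which is immediate from \eqref{eq:dp-skew}.
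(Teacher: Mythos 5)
Your proof is correct and matches the paper's argument exactly: both invoke the shift $h^{i-\dim X}(IC^{\bullet}_{V_p}) = h^i_{dR}(D_p)$ with $\dim X = \binom{n}{2}$, plug in Theorem~\ref{thm:deRham-Dp-skew}, and verify via \eqref{eq:dp-skew} that the resulting exponent is $-d_p$. Nothing further to add.
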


\begin{proof}
 Since $\dim(X)={n\choose 2}$, and $h^{i-\dim(X)}\left(IC^{\bullet}_{V_p}\right) = h^i_{dR}(D_p)$, the conclusion follows from \eqref{eq:dp-skew} and \eqref{eq:deRham-Dp-skew}.
\end{proof}

Finally, we can derive the formulas for $\chi_{i,j}$ and $e_{i,j}$ from the Introduction.

\begin{corollary}\label{cor:chi-Euler-skew}
 Let $m=\lfloor n/2\rfloor$. For $0\leq i\leq j\leq m$ we have that
 \[\chi_{i,j} = (-1)^{d_j}\cdot{m-i\choose j-i},\quad e_{i,j} = {m-i \choose j-i}.\]
\end{corollary}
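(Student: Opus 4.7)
The plan is to mirror the three-step strategy used in Corollary~\ref{cor:chi-eij-general} (general matrices) and Corollary~\ref{cor:chi-eij-symmetric} (symmetric matrices), noting that the skew-symmetric case will be the most transparent of the three: all characteristic cycles are irreducible, so the Kashiwara formula \eqref{eq:local-index} collapses completely. First I will establish the case $i=0$ by taking the Euler characteristic in Corollary~\ref{cor:IC-skew-mat}. Since $V_j$ is stable under the scaling action of $\bb{C}^*$, the global intersection cohomology coincides with the stalk at $0$ (see \cite{htt}*{(13.2.40)}), so $\chi_{0,j}$ is obtained by setting $q=-1$ in the polynomial $q^{-d_j}\cdot{m\choose j}_{q^4}$. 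Using \eqref{eq:qbin-is-bin}, evaluating ${m\choose j}_{q^4}$ at $q=-1$ yields ${m\choose j}_{1}={m\choose j}$, and hence $\chi_{0,j}=(-1)^{d_j}\cdot{m\choose j}$, which matches the claimed formula at $i=0$.

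Next I will handle $i>0$ by induction on $n$ via the standard transversal slice argument—identical to the one invoked in the two previous sections. The complement $X^{n,\skew}\setminus V_{i-1}$ is locally isomorphic to $X^{n-2i,\skew}\times B$ for a smooth base $B$, in a manner compatible with the rank stratification (matching $X_{i+k}^{n,\skew}$ with $X_k^{n-2i,\skew}\times B$). Combined with the external product formula $IC^{\bullet}_{V_{j-i}^{n-2i,\skew}\times B}=IC^{\bullet}_{V_{j-i}^{n-2i,\skew}}\boxtimes IC^{\bullet}_B$ and the triviality $IC^{\bullet}_B=\bb{C}[\dim B]$, this gives $\chi_{i,j}^{n,\skew}=(-1)^{\dim B}\cdot\chi_{0,j-i}^{n-2i,\skew}$, so the inductive hypothesis delivers the formula for $\chi_{i,j}$.

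Finally, I will deduce the local Euler obstructions directly from the Kashiwara microlocal index formula \eqref{eq:local-index}. By the Theorem on characteristic cycles in the Introduction, $m_{k,j}=\delta_{k,j}$ in the skew-symmetric case, so \eqref{eq:local-index} collapses to $\chi_{i,j}=(-1)^{d_j}\cdot e_{i,j}$; rearranging yields $e_{i,j}={m-i\choose j-i}$. There is essentially no obstacle in this argument—the substantive content was already packaged in Corollary~\ref{cor:IC-skew-mat} and in the cited irreducibility of characteristic cycles from \cite{boe-fu}; the only subtlety to watch is the correct indexing convention whereby the stratum $X_i$ parametrizes rank $2i$ matrices, which is precisely what makes the slice size $n-2i$ rather than $n-i$.
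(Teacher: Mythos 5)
Your proposal is correct and follows the same three-step strategy as the paper's own proof: compute $\chi_{0,j}$ by evaluating Corollary~\ref{cor:IC-skew-mat} at $q=-1$ and identifying global and stalk intersection cohomology via the $\bb{C}^*$-action, reduce the $i>0$ case to $i=0$ by the transversal slice argument with $X^{n,\skew}\setminus V_{i-1}$ locally of the form $X^{n-2i,\skew}\times B$, and then use the irreducibility of characteristic cycles to collapse \eqref{eq:local-index} to $\chi_{i,j}=(-1)^{d_j}e_{i,j}$. If anything, you are slightly more explicit than the paper in tracking the $(-1)^{\dim B}$ factor in the inductive step (which the paper suppresses since it is absorbed into the difference $d_j^{(n)}-d_{j-i}^{(n-2i)}$) and in flagging that the rank-$2i$ indexing is what produces the $n-2i$ in the slice.
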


\begin{proof}
 Since the characteristic cycles of the modules $D_p$ are irreducible, we get from \eqref{eq:local-index} that $\chi_{i,j}=(-1)^{d_j}e_{i,j}$, so it suffices to prove the formula for $\chi_{i,j}$. When $i=0$ we set $q=-1$ and $p=j$ in Corollary~\eqref{cor:IC-skew-mat}, and use the identification between global intersection cohomology and the stalk at $0$. When $i>0$ we use the inductive structure as in the case of general and symmetric matrices, which gives $\chi^{n,\skew}_{i,j}=\chi^{n-2i,\skew}_{0,j-i}$.
\end{proof}

\section*{Acknowledgements}
 Experiments with the computer algebra software Macaulay2 \cite{M2} have provided numerous valuable insights. Raicu acknowledges the support of the National Science Foundation Grant No.~1901886. We are grateful to the anonymous referee for many useful suggestions and references.

	\begin{bibdiv}
		\begin{biblist}

\bib{boe}{article}{
   author={Boe, B. D.},
   title={Kazhdan-Lusztig polynomials for Hermitian symmetric spaces},
   journal={Trans. Amer. Math. Soc.},
   volume={309},
   date={1988},
   number={1},
   pages={279--294},
}

\bib{boe-fu}{article}{
   author={Boe, B. D.},
   author={Fu, J. H. G.},
   title={Characteristic cycles in Hermitian symmetric spaces},
   journal={Canad. J. Math.},
   volume={49},
   date={1997},
   number={3},
   pages={417--467},
}

\bib{braden-thesis}{book}{
   author={Braden, T. C.},
   title={Characteristic cycles of toric varieties: Perverse sheaves on rank stratifications},
   note={Thesis (Ph.D.)--Massachusetts Institute of Technology},
   publisher={ProQuest LLC, Ann Arbor, MI},
   date={1995},
}

\bib{BFL}{article}{
   author={Bressler, P.},
   author={Finkelberg, M.},
   author={Lunts, V.},
   title={Vanishing cycles on Grassmannians},
   journal={Duke Math. J.},
   volume={61},
   date={1990},
   number={3},
   pages={763--777},
}

\bib{bra-sch}{article}{
   author={Brasselet, J.-P.},
   author={Schwartz, M.-H.},
   title={Sur les classes de Chern d'un ensemble analytique complexe},
   language={French},
   conference={
      title={The Euler-Poincar\'{e} characteristic (French)},
   },
   book={
      series={Ast\'{e}risque},
      volume={82},
      publisher={Soc. Math. France, Paris},
   },
   date={1981},
   pages={93--147},
}

\bib{bri-josh}{article}{
   author={Brion, M.},
   author={Joshua, R.},
   title={Vanishing of odd-dimensional intersection cohomology. II},
   journal={Math. Ann.},
   volume={321},
   date={2001},
   number={2},
   pages={399--437},
}

\bib{BDK}{article}{
   author={Brylinski, J.-L.},
   author={Dubson, A. S.},
   author={Kashiwara, M.},
   title={Formule de l'indice pour modules holonomes et obstruction d'Euler
   locale},
   language={French, with English summary},
   journal={C. R. Acad. Sci. Paris S\'{e}r. I Math.},
   volume={293},
   date={1981},
   number={12},
   pages={573--576},
}

\bib{bry-kash}{article}{
   author={Brylinski, J.-L.},
   author={Kashiwara, M.},
   title={Kazhdan-Lusztig conjecture and holonomic systems},
   journal={Invent. Math.},
   volume={64},
   date={1981},
   number={3},
   pages={387--410},
}

\bib{evens-mirkovic}{article}{
   author={Evens, S.},
   author={Mirkovi\'{c}, I.},
   title={Characteristic cycles for the loop Grassmannian and nilpotent
   orbits},
   journal={Duke Math. J.},
   volume={97},
   date={1999},
   number={1},
   pages={109--126},
}

\bib{gaffney-et-al}{article}{
   author={Gaffney, T.},
   author={Grulha, N. G., Jr.},
   author={Ruas, M. A. S.},
   title={The local Euler obstruction and topology of the stabilization of
   associated determinantal varieties},
   journal={Math. Z.},
   volume={291},
   date={2019},
   number={3-4},
   pages={905--930},
}

\bib{ginsburg}{article}{
   author={Ginsburg, V.},
   title={Characteristic varieties and vanishing cycles},
   journal={Invent. Math.},
   volume={84},
   date={1986},
   number={2},
   pages={327--402},
}

\bib{M2}{article}{
          author = {Grayson, D. R.},
          author = {Stillman, M. E.},
          title = {Macaulay 2, a software system for research
                   in algebraic geometry},
          journal = {Available at \url{http://www.math.uiuc.edu/Macaulay2/}}
        }

\bib{gon-spr}{article}{
   author={Gonz\'{a}lez-Sprinberg, G.},
   title={L'obstruction locale d'Euler et le th\'{e}or\`eme de MacPherson},
   language={French},
   conference={
      title={The Euler-Poincar\'{e} characteristic (French)},
   },
   book={
      series={Ast\'{e}risque},
      volume={83},
      publisher={Soc. Math. France, Paris},
   },
   date={1981},
   pages={7--32},
}
        
\bib{grothendieck}{article}{
   author={Grothendieck, A.},
   title={On the de Rham cohomology of algebraic varieties},
   journal={Inst. Hautes \'{E}tudes Sci. Publ. Math.},
   number={29},
   date={1966},
   pages={95--103},
}

\bib{htt}{book}{
   author={Hotta, R.},
   author={Takeuchi, K.},
   author={Tanisaki, T.},
   title={$D$-modules, perverse sheaves, and representation theory},
   series={Progress in Mathematics},
   volume={236},
   note={Translated from the 1995 Japanese edition by Takeuchi},
   publisher={Birkh\"{a}user Boston, Inc., Boston, MA},
   date={2008},
   pages={xii+407},
}

\bib{kas-loc-ind}{article}{
   author={Kashiwara, M.},
   title={Index theorem for a maximally overdetermined system of linear
   differential equations},
   journal={Proc. Japan Acad.},
   volume={49},
   date={1973},
   pages={803--804},
}

\bib{kas-sch}{book}{
   author={Kashiwara, M.},
   author={Schapira, P.},
   title={Sheaves on manifolds},
   series={Grundlehren der Mathematischen Wissenschaften [Fundamental
   Principles of Mathematical Sciences]},
   volume={292},
   note={With a chapter in French by C. Houzel},
   publisher={Springer-Verlag, Berlin},
   date={1990},
   pages={x+512},
}

\bib{kas-sai}{article}{
   author={Kashiwara, M.},
   author={Saito, Y.},
   title={Geometric construction of crystal bases},
   journal={Duke Math. J.},
   volume={89},
   date={1997},
   number={1},
   pages={9--36},
}

\bib{kennedy}{article}{
   author={Kennedy, G.},
   title={MacPherson's Chern classes of singular algebraic varieties},
   journal={Comm. Algebra},
   volume={18},
   date={1990},
   number={9},
   pages={2821--2839},
}

\bib{KL-poincare}{article}{
   author={Kazhdan, D.},
   author={Lusztig, G.},
   title={Schubert varieties and Poincar\'{e} duality},
   conference={
      title={Geometry of the Laplace operator},
      address={Proc. Sympos. Pure Math., Univ. Hawaii, Honolulu, Hawaii},
      date={1979},
   },
   book={
      series={Proc. Sympos. Pure Math., XXXVI},
      publisher={Amer. Math. Soc., Providence, R.I.},
   },
   date={1980},
   pages={185--203},
}

\bib{lak-rag}{book}{
   author={Lakshmibai, V.i},
   author={Raghavan, K. N.},
   title={Standard monomial theory},
   series={Encyclopaedia of Mathematical Sciences},
   volume={137},
   note={Invariant theoretic approach;
   Invariant Theory and Algebraic Transformation Groups, 8},
   publisher={Springer-Verlag, Berlin},
   date={2008},
   pages={xiv+265},
}

\bib{las-sch}{article}{
   author={Lascoux, A.},
   author={Sch\"{u}tzenberger, M.-P.},
   title={Polyn\^{o}mes de Kazhdan \& Lusztig pour les grassmanniennes},
   language={French},
   conference={
      title={Young tableaux and Schur functors in algebra and geometry
      (Toru\'{n}, 1980)},
   },
   book={
      series={Ast\'{e}risque},
      volume={87},
      publisher={Soc. Math. France, Paris},
   },
   date={1981},
   pages={249--266},
}

\bib{LeVan-Rai}{article}{
   author={LeVan, P.},
   author={Raicu, C.},
   title={Euler obstructions for the Lagrangian Grassmannian},
   journal = {arXiv},
   number = {2105.08823},
   date={2021}
}

\bib{lor-wal}{article}{
   author={L\H{o}rincz, A. C.},
   author={Walther, U.},
   title={On categories of equivariant {$D$}-modules},
   journal={Adv. Math.},
   volume = {351},
   date={2019},
   pages = {429--478}
}

\bib{lor-rai}{article}{
   author={L\H{o}rincz, A. C.},
   author={Raicu, C.},
   title={Iterated local cohomology groups and Lyubeznik numbers for
   determinantal rings},
   journal={Algebra Number Theory},
   volume={14},
   date={2020},
   number={9},
   pages={2533--2569},
}

\bib{macpherson}{article}{
   author={MacPherson, R. D.},
   title={Chern classes for singular algebraic varieties},
   journal={Ann. of Math. (2)},
   volume={100},
   date={1974},
   pages={423--432},
}

\bib{mihalcea-singh}{article}{
   author={Mihalcea, L. C.},
   author={Singh, R.},
   title={Mather classes and conormal spaces of Schubert varieties in cominuscule spaces},
   journal = {arXiv},
   number = {2006.04842},
   date={2020}
}

\bib{per-rai}{article}{
   author={Perlman, M.},
   author={Raicu, C.},
   title={Hodge ideals for the determinant hypersurface},
   journal={Selecta Math. (N.S.)},
   volume={27},
   date={2021},
   number={1},
   pages={Paper No. 1, 22},
}

\bib{promtapan}{book}{
   author={Promtapan, S.},
   title={Equivariant Chern-Schwartz-Macpherson Classes of Symmetric and
   Skew-symmetric Determinantal Varieties},
   note={Thesis (Ph.D.)--The University of North Carolina at Chapel Hill},
   publisher={ProQuest LLC, Ann Arbor, MI},
   date={2019},
   pages={115},
}

\bib{promrim}{article}{
   author={Promtapan, S.},
   author={Rim\'anyi, R.},
   title={Characteristic classes of symmetric and skew-symmetric degeneracy loci},
   journal = {Facets of Algebraic Geometry: A Collection in Honor of William Fulton's 80th Birthday, Cambridge Univ. Press, LMS Lecture Note Series},
   number={472},
   date={2022}
}

\bib{raicu-dmods}{article}{
   author={Raicu, C.},
   title={Characters of equivariant $\mathcal{D}$-modules on spaces of matrices},
   journal={Compos. Math.},
   volume={152},
   date={2016},
   number={9},
   pages={1935--1965},
}

\bib{raicu-survey}{article}{
   author={Raicu, C.},
   title={Homological invariants of determinantal thickenings},
   journal={Bull. Math. Soc. Sci. Math. Roumanie (N.S.)},
   volume={60(108)},
   date={2017},
   number={4},
   pages={425--446},
}

\bib{raicu-weyman}{article}{
   author={Raicu, C.},
   author={Weyman, J.},
   title={Local cohomology with support in generic determinantal ideals},
   journal={Algebra \& Number Theory},
   volume={8},
   date={2014},
   number={5},
   pages={1231--1257},
}

\bib{raicu-weyman-loccoh}{article}{
   author={Raicu, C.},
   author={Weyman, J.},
   title={Local cohomology with support in ideals of symmetric minors and
   Pfaffians},
   journal={J. Lond. Math. Soc. (2)},
   volume={94},
   date={2016},
   number={3},
   pages={709--725},
}

\bib{sabbah}{article}{
   author={Sabbah, C.},
   title={Quelques remarques sur la g\'{e}om\'{e}trie des espaces conormaux},
   language={French},
   note={Differential systems and singularities (Luminy, 1983)},
   journal={Ast\'{e}risque},
   number={130},
   date={1985},
   pages={161--192},
}

\bib{timchenko}{article}{
   author={Timchenko, K.},
   title={Characteristic cycles for $K$-orbits on Grassmannians},
   journal = {arXiv},
   number = {1910.11120},
   date={2019}
}

\bib{weyman}{book}{
   author={Weyman, J.},
   title={Cohomology of vector bundles and syzygies},
   series={Cambridge Tracts in Mathematics},
   volume={149},
   publisher={Cambridge University Press, Cambridge},
   date={2003},
   pages={xiv+371},
}

\bib{zel-small}{article}{
   author={Zelevinski{\uuu{\i}}, A. V.},
   title={Small resolutions of singularities of Schubert varieties},
   language={Russian},
   journal={Funktsional. Anal. i Prilozhen.},
   volume={17},
   date={1983},
   number={2},
   pages={75--77},
}

\bib{zhang2}{article}{
   author={Zhang, X.},
   title={Geometric Invariants of Recursive Group Orbit Stratification},
   journal = {arXiv},
   number = {2009.09362},
   date={2020}
}

\bib{zhang}{article}{
   author={Zhang, X.},
   title={Local Euler obstructions and Chern--Mather classes of determinantal varieties},
   journal={Comm. Algebra},
   volume={49},
   date={2021},
   number={9},
   pages={3941--3960}
}

		\end{biblist}
	\end{bibdiv}

\end{document}